\newtheorem{thm}{Theorem}[section]
\newtheorem{cor}[thm]{Corollary}
\newtheorem{lem}[thm]{Lemma}
\newtheorem{prop}[thm]{Proposition}
\theoremstyle{definition}
\newtheorem{defn}[thm]{Definition}
\newtheorem{rem}[thm]{Remark}
\newtheorem{ex}[thm]{Example}
\numberwithin{equation}{section}
\newcommand{\Z}{\mathbb Z}
\newcommand{\id}{\mathrm{id}}
\newcounter{zlist} 
  \newenvironment{zlist}{\begin{list}{(\arabic{zlist})}{ 
  \usecounter{zlist}\leftmargin2.5em\labelwidth2em\labelsep0.5em 
  \topsep0.6ex
  \parsep0.3ex plus0.2ex minus0.1ex}}{\end{list}}
  \newcounter{blist} 
  \newenvironment{blist}{\begin{list}{(\alph{blist})}{ 
  \usecounter{blist}\leftmargin2.5em\labelwidth2em\labelsep0.5em 
  \topsep0.6ex  
  \parsep0.3ex plus0.2ex minus0.1ex}}{\end{list}} 
\title{Lie and Nijenhuis brackets on affine spaces}
\author[T. Brzezi\'nski]{Tomasz Brzezi\'nski}
\address{
Department of Mathematics, Swansea University, 
Swansea University Bay Campus,
Fabian Way,
Swansea,
  Swansea SA1 8EN, U.K.\ \newline 
Faculty of Mathematics, University of Bia{\l}ystok, K.\ Cio{\l}kowskiego  1M,
15-245 Bia\-{\l}ys\-tok, Poland}
\email{T.Brzezinski@swansea.ac.uk}
\author[J. Papworth]{James Papworth}
\address{
Department of Mathematics, Swansea University, 
Swansea University Bay Campus,
Fabian Way,
Swansea,
  Swansea SA1 8EN, U.K.}
\email{918550@swansea.ac.uk}
 \subjclass[2020]{17B05; 20N10; 81R12}
  \keywords{heap; affine module; affine space; Lie bracket; Nijenhuis operator}
\begin{document}
\begin{abstract}
Lie algebras are extended to the affine case using the heap operation, giving them a definition that is not dependent on the unique element $0$, such that they still adhere to antisymmetry and Jacobi properties. It is then looked at how Nijenhuis brackets function on these Lie affgebras and demonstrated how they fulfil the compatibility condition in the affine case.
 \end{abstract}
\maketitle

\section{Introduction} \noindent
The aim of this paper is twofold: to present an intrinsic definition of a Lie bracket on an affine space that does not depend on any choice of a vector space (or a point in the affine space) and to extend the theory of Nijenhuis operators from Lie algebras to the affine case. 

A need for considering Lie brackets on affine spaces arose naturally as a consequence of the Tulczyjew programme of frame-independent approach to classical mechanics \cite{Tul:fra}. The differential geometric aspects of the programme involved discussion of structures in which vector spaces are replaced by affine spaces \cite{GraGra:fra} and led to the development of the AV-differential geometry in\cite{GraGra:av1}, \cite{GraGra:av2}. The appearance of Lie brackets on affine spaces or {\em Lie affgebras} in \cite{GraGra:Lie} was a necessary step. In their definition of a Lie affgebra the authors of \cite{GraGra:Lie} rely on the existence of a vector space: both the antisymmetry and the Jacobi identity of a Lie bracket are formulated in this vector space. On the other hand a vector space independent definition of an affine space is available (see e.g.\ \cite{OstSch:bar} or \cite{BreBrz:hea} for a more recent contextual discussion). This uses the heap ternary operation \cite{Pru:the} naturally embedded in an affine space and, also ternary, multiplication by scalars (see Definition~\ref{def.aff.mod}). One can assign a vector space to \textbf{any} element of the affine space in which the chosen point is the zero vector. The intrinsic definition of a Lie affgebra we propose in this paper, Definition~\ref{def.Lie},  removes the need for a specified element entirely and formulates the antisymmetry and Jacobi identity of the Lie bracket without invoking a neutral element or a vector space. In Theorem~\ref{thm.Lie.alg} we show that once a choice of a point is made and vector space obtained then such an affine Lie bracket reduces to a Lie bracket on this vector space through linearisation; non-isomorphic brackets on the affine level might produce isomorphic or even identical ones on the linear level (see Example~\ref{ex.action.o}). 

An important method of solving integrable Hamiltonian systems relies on the existence of (at least) two Poisson structures, compatible one with the other in the sense that their linear combination is again a Poisson structure \cite{Mag:sim}. This leads to the bi-Hamiltonian structure: the same mechanical process is governed by two Hamiltonians, which allows one to find easier all constants of motion in involution. One of the ways of ensuring that compatible Poisson brackets exist is to deform the given Poisson structure in a way inspired by the Nijenhuis-Richardson deformation of graded Lie algebras \cite{NijRic:def}. This has been explored in \cite{KosSchMag:Poi} in terms of Poisson-Nijenhuis structures which arise trough the existence of a particular operator, termed a {\em Nijenhuis operator}. As presented in \cite{KosSchMag:Poi}, Nijenhuis operators can be defined for all Lie algebras, not only for Poisson algebras. It was observed in \cite{BrzPap:aff} in the associative case that the conditions that Nijenhuis operators and Nijenhuis brackets need to satisfy are affine in flavour and hence compatible with the structures defined on affine spaces. In this paper we extend Nijenhuis brackets to Lie affgebras of Definition~\ref{def.Lie}. Nijenhuis brackets and operators do not rely on any choices of elements, but once a choice is made and a Lie affgebra reduced to the Lie algebra, the corresponding linearisation of the Nijenhuis operator is a Nijenhuis operator on this Lie algebra in the sense of \cite[Definition~1.1]{KosSchMag:Poi}.  One  key result of the present paper is the affine (or barycentric) compatibility of the initial Lie and deformed Nijenhuis brackets Theorem~\ref{thm.Nij.iter}. This would allow one to use methods of bi-Hamiltonian systems in frame independent mechanics. 

We begin with preliminary information on heaps (Definition \ref{def.heap}), affine $\mathbb{K}$-modules (Definition \ref{def.aff.mod}), and $\mathbb{K}$-affgebras (Definition \ref{def.assoc}) as well as a remark on the translation isomorphism that will be used in the latter section of the paper (Remark \ref{rem.trans}).

Then using these notions, we form a new definition, the Lie affgebra (Definition \ref{def.Lie}), a way of interpreting the standard Lie bracket
as a bi-affine operation as opposed to a bilinear one. This structure has its own version of the antisymmetry and Jacobi identity conditions, but these condition no longer depend on the existence of the unique neutral element. We then go through various examples and properties of Lie affgebras throughout Section 3 as well as showing how the affine Lie bracket reduces to the linear case in Proposition \ref{prop.action}, pre-Lie affgebras (Definition \ref{def.pre}), and the interaction between Lie affgebras and derivations (Definition \ref{def.deriv}).

Finally, in Section 4 we look to how Nijenhuis operators act on Lie affgebras, taking some definitions from  \cite{KosSchMag:Poi} and generalising them to the affine case. In Definition~\ref{def.nijen.op} we redefine both the Nijenhuis operator and the Nijenhuis bracket so that they no longer depend on the existence of the unique element $0$, and can thus be used in our affine calculations. In analogy to the Lie algebra case, Theorem~\ref{nij.is.lie.bra} states that an affine Nijenhuis bracket is a Lie bracket (in the sense of Defition~\ref{def.Lie})  when it is acting on a Lie affgebra. We then establish that any power of a Nijenhuis operator is a Nijenhuis operators and that the corresponding brackets are affine or barycentic compatible in Theorem~\ref{thm.Nij.iter}. The paper then closes with the reduction of affine Nijenhuis operators to linear ones in Theorem~\ref{thm.nij.final}.

\section{Preliminaries on heaps, affine spaces and affgebras}
The following notion was proposed in \cite{Pru:the} as an affine approach to abelian groups.  
\begin{defn}\label{def.heap} 
An \textbf{abelian heap} is a set $A$, together with a ternary operation 
$$\langle---\rangle:H^{3} \rightarrow H, \qquad (a,b,c) \mapsto \langle a,b,c\rangle,$$ such that for all $a,b,c,d,e \in H$, 
$$ 
    \langle\langle a,b,c\rangle,d,e\rangle = \langle a,b, \langle c,d,e\rangle \rangle, \quad 
 \langle a,b,b\rangle = a = \langle b,b,a\rangle, \quad \langle a,b,c\rangle = \langle c,b,a\rangle
$$
A homomorphism of heaps is a function that preserves the heap operations.
\end{defn}

An abelian heap is an affine version of an abelian group in the sense that first any abelian group is an abelian heap with operation $\langle a,b,c\rangle = a-b+c$, second, any coset of an abelian group is an abelian heap, and, third, any non-empty abelian heap $H$ gives rise to the family of isomorphism groups by {\em retracting} at any element, that is by reducing the ternary operation to the binary one by setting $a+b = \langle a,o,b\rangle$ for any fixed element $o\in H$. One easily finds that with this definition of addition, 
\begin{equation}\label{+-}
\langle a,b,c\rangle = a-b+c,
\end{equation}
which explains that a heap operation can always be understood as a combination of addition and subtraction and also justifies our frequent interpretation of this operation as such a combination\footnote{One has to be constantly aware  about the meaning of the binary operations as they are defined relatively to a chosen element $o$, even though this choice is immaterial for the overall value of the combination \eqref{+-}}. This is also a quick way of realising that the placements of brackets $\langle, \rangle$ in multi-application of the heap operation does not play any role as long as the parity of position of elements is preserved. Thus we will write
$$
\langle a_1,a_2,\ldots, a_{2n+1}\rangle _{2n+1} := 
\langle \ldots  \langle \langle a_1,a_2,a_3\rangle,a_4,a_5\rangle, \ldots, a_{2n+1}\rangle,
$$
for multiple application of the ternary operation, or simply $
\langle a_1,a_2,\ldots, a_{2n+1}\rangle,
$ if the length of the expression is clear from the context (which it is in this case). Finally, any equality of the form
$$
\langle a,b,c \rangle = d
$$
remains valid under a cyclic permutation $a\to b\to c\to d \to a$.

Abelian heaps can be also understood as affine $\mathbb{Z}$-modules in the sense of \cite{OstSch:bar}. We explain this presently, first by viewing the original definition of affine modules through lenses of heaps of modules \cite{BreBrz:hea}, which allows one to make an {\em intrinsic}, i.e.\ with no reference to a vector space, definition of an affine space.

\begin{defn}\label{def.aff.mod}
    Let $\mathbb{K}$ be a commutative ring with identity. Then an \textbf{affine $\mathbb{K}$-module} is a non-empty abelian heap $A$ together with the ternary action
    $$
    \triangleright : \mathbb{K}\times A\times A \longrightarrow A, \qquad (\alpha,a,b)\longmapsto \alpha \triangleright_a b, $$
    satisfying the following conditions, for all $a,b,c\in A$ and $\alpha,\beta \in \mathbb{K}$,
    \begin{blist}
        \item $\alpha \triangleright_a - : A\to A$ and $- \triangleright_a b : \mathbb{K}\to A$ are heap homomorphisms, where $\mathbb{K}$ is understood as a heap with the operation $\alpha-\beta +\gamma$;
        \item $(\alpha\beta) \triangleright_a b = \alpha \triangleright_a (\beta \triangleright_a b)$;
        \item $\alpha\triangleright_a b = \langle \alpha\triangleright_c b, \alpha\triangleright_c a, a\rangle$;
        \item $0\triangleright_a b = a$ and $1\triangleright_a b = b$.
    \end{blist}
    The element $a$ in $\alpha \triangleright_a b$ is referred to as the \textbf{base} of the action. A homomorphism of affine $\mathbb{K}$-modules is a heap homomorphism $f$ such that
    $$
    f(\alpha\triangleright_a b) = \alpha\triangleright_{f(a)} f(b).
    $$

    As is customary,  when $\mathbb{K}$ is a field we refer to an 
    \textbf{affine space} rather than an affine module.
\end{defn}

It is worth to note that the combination of conditions (a) and (c) implies that, for all $a,b,c,d\in A$ and $\alpha\in \mathbb{K}$,
\begin{equation}\label{heap.middle}
 \alpha \triangleright_{\langle a, b,c\rangle} d = \langle \alpha \triangleright_a d, \alpha \triangleright_b d, \alpha \triangleright_c d\rangle,   
\end{equation}
i.e. $\alpha\triangleright_{-} d$ is a heap homomorphism; see \cite[Lemma~3.5]{BreBrz:hea}. Furthermore, (c) alone implies that 
\begin{equation}\label{aa}
    \alpha \triangleright_a a = a.
\end{equation}

One easily checks that an abelian heap $A$ is an affine $\Z$-module with the unique action determined by conditions (d) and (a) in Definition~\ref{def.aff.mod}:
$$
n\triangleright_a b = \begin{cases}
    \langle b,a,b,\ldots,a, b\rangle_{2n-1} & n>0,\\
    \langle a,b,a,\ldots,b, a\rangle_{|2n+1|} & n\leq 0.
\end{cases}$$
Any morphism of heaps automatically is a morphism of corresponding affine $\Z$-modules. This establishes a categorical isomorphism of affine $\Z$-modules and abelian heaps in the same way as isomorphism of the categories of abelian groups and $\Z$-modules.

We will make frequent use of the following,
\begin{lem}\label{lem.aff.map}
Let $f, g,h: A\to B$ be homomorphisms of affine $\mathbb{K}$-modules. Then 
$$
\langle f,g,h\rangle : A\to B, \qquad a\mapsto \langle f(a),g(a),h(a)\rangle,
$$
is a homomorphism of affine $\mathbb{K}$-modules
\end{lem}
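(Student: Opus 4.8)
The plan is to verify the two defining properties of an affine $\mathbb{K}$-module homomorphism for the map $\langle f,g,h\rangle$: namely that it is a homomorphism of heaps, and that it commutes with the ternary scalar action $\triangleright$. Since $f,g,h$ are all homomorphisms of affine $\mathbb{K}$-modules, each is in particular a heap homomorphism, so the first property should follow by manipulating nested heap expressions using only the heap axioms of Definition~\ref{def.heap}.

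First I would check the heap-homomorphism property. For $a,b,c\in A$ I must show
$$
\langle f,g,h\rangle(\langle a,b,c\rangle) = \big\langle \langle f,g,h\rangle(a), \langle f,g,h\rangle(b), \langle f,g,h\rangle(c)\big\rangle.
$$
Expanding the left-hand side and using that $f,g,h$ are each heap homomorphisms turns it into $\langle \langle f(a),f(b),f(c)\rangle, \langle g(a),g(b),g(c)\rangle, \langle h(a),h(b),h(c)\rangle\rangle$, while the right-hand side is $\langle \langle f(a),g(a),h(a)\rangle, \langle f(b),g(b),h(b)\rangle, \langle f(c),g(c),h(c)\rangle\rangle$. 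The remaining task is purely combinatorial: both sides are iterated heap operations applied to the same nine elements, and I would reconcile them using the associativity, symmetry, and the observation recorded after \eqref{+-} that bracket placement is immaterial provided the parity of each element's position is preserved. Writing both expressions in the normal form $\langle a_1,\ldots,a_9\rangle_9$ and checking that corresponding elements sit in positions of the same parity completes this step.

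Next I would verify compatibility with the scalar action, i.e.\ for $\alpha\in\mathbb{K}$ and $a,b\in A$,
$$
\langle f,g,h\rangle(\alpha\triangleright_a b) = \alpha\triangleright_{\langle f,g,h\rangle(a)} \langle f,g,h\rangle(b).
$$
The left-hand side equals $\langle f(\alpha\triangleright_a b), g(\alpha\triangleright_a b), h(\alpha\triangleright_a b)\rangle$, and applying the module-homomorphism property of each of $f,g,h$ rewrites this as $\langle \alpha\triangleright_{f(a)}f(b), \alpha\triangleright_{g(a)}g(b), \alpha\triangleright_{h(a)}h(b)\rangle$. The target expression is $\alpha\triangleright_{\langle f(a),g(a),h(a)\rangle}\langle f(b),g(b),h(b)\rangle$. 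The key identity bridging these is \eqref{heap.middle}, which says $\alpha\triangleright_{\langle -\rangle} d$ distributes over the heap in the base, combined with property (a) of Definition~\ref{def.aff.mod}, which gives heap-homomorphism behaviour in the second slot. I anticipate the main obstacle to be here: I must apply \eqref{heap.middle} in the base variable and the heap-homomorphism property in the acted-upon variable \emph{simultaneously}, since the three bases $f(a),g(a),h(a)$ and the three arguments $f(b),g(b),h(b)$ vary together. The cleanest route is to first expand the base using \eqref{heap.middle}, obtaining an expression with a common second argument, and then expand that argument using property (a); after this double expansion I would again reduce to the normal form $\langle\cdots\rangle_9$ and match parities to identify the two sides.

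Both steps ultimately reduce to the same bookkeeping principle, so once the parity argument is set up carefully in the first step it can be reused verbatim in the second. I do not expect any genuinely hard point beyond keeping track of the ordering of the nine terms; the only subtlety worth flagging explicitly is that \eqref{heap.middle} is precisely the statement that the base-dependence of the action is itself a heap homomorphism, which is exactly what is needed to push $\langle f,g,h\rangle$ through $\triangleright$ on the base side.
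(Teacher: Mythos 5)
Your overall route is the same as the paper's for the part the paper actually proves (compatibility with the scalar action); the heap-homomorphism step you add is correct and genuinely is a matter of flattening both sides to the \emph{same} nine elements and checking parities. However, there is a gap in your second step. After you expand the right-hand side $\alpha\triangleright_{\langle f(a),g(a),h(a)\rangle}\langle f(b),g(b),h(b)\rangle$ using \eqref{heap.middle} in the base and property (a) in the argument, you obtain the nine terms $\alpha\triangleright_{x(a)}y(b)$ for $x,y\in\{f,g,h\}$ with signs $(-1)^{i+j}$, whereas the left-hand side is the three-term diagonal $\big\langle \alpha\triangleright_{f(a)}f(b),\alpha\triangleright_{g(a)}g(b),\alpha\triangleright_{h(a)}h(b)\big\rangle$. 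These are \emph{not} iterated heap operations on the same multiset of elements, so "reduce to normal form and match parities" cannot close the argument: the six off-diagonal terms such as $\alpha\triangleright_{f(a)}g(b)$ simply do not occur on the left. The missing ingredient is the base-change axiom (c) of Definition~\ref{def.aff.mod} (equivalently formula \eqref{act.+-}), which lets you rewrite every $\alpha\triangleright_{x(a)}y(b)$ relative to one common base; only after that conversion do the off-diagonal contributions cancel. This is precisely what the paper's proof does: it brings all three actions to the common base $f(a)$, rearranges, and then reassembles using (a), \eqref{aa} and (c). So you have identified the right tools except for (c), and without it the final cancellation you are counting on does not happen.
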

\begin{proof}
    The lemma is a consequence of the base change property (c) in Definition~\ref{def.aff.mod}. More precisely, for all $\alpha\in \mathbb{K}$ and $a,b\in A$, the actions
    $$
    \begin{aligned}
      \langle f,g,h\rangle(\alpha\triangleright_ab) = \langle \alpha\triangleright_{f(a)}f(b),\alpha\triangleright_{g(a)}g(b),\alpha\triangleright_{h(a)}h(b)\rangle,  
    \end{aligned}
    $$
    can be brought to a common base, say $f(a)$ to give
    $$
    \begin{aligned}
      \langle \alpha\triangleright_{f(a)}f(b),\alpha\triangleright_{f(a)}g(b), \alpha\triangleright_{f(a)}g(a), g(a), \alpha\triangleright_{f(a)}h(b), \alpha\triangleright_{f(a)}h(a), h(a)\rangle.  
    \end{aligned}
    $$
    The rearrangements and the heap homomorphism properties (a) in Definition~\ref{def.aff.mod} combined with \eqref{aa}  and the base change prooperty (c) again yield
    $$
    \Big\langle \alpha\triangleright_{f(a)}\langle f,g,h\rangle(b), \alpha\triangleright_{f(a)}\langle f,g,h\rangle(a),  \langle f,g,h\rangle(a)\Big\rangle = \alpha\triangleright_{\langle f,g,h\rangle(a)}\langle f,g,h\rangle(b),
    $$
    as required.
\end{proof}

\begin{rem}\label{rem.act.+-}
    Let $A$ be an affine $\mathbb{K}$-module. We can take any element $o\in A$ and first consider the abelian group $A_o$. Thanks to the base change axiom (c) in Definition~\ref{def.aff.mod}, the bases of all actions can be related to the base $o$. Writing $\alpha a := \alpha\triangleright_o a$ we then immediately find that in terms of operations in $A_o$,
    \begin{equation}\label{act.+-}
    \alpha\triangleright_a b = \alpha b - \alpha a +a, \qquad \mbox{for all $a,b\in A$, $\alpha \in \mathbb{K}$}.
    \end{equation}
    Since the left-hand side of \eqref{act.+-} is independent of the choice of $o$, the right-hand side is also independent of this choice in the sense that the expression formally remains the same whatever  $o$ is chosen. Of course the meaning of binary operations is relative to this choice.
\end{rem}
Let us explain briefly that the definition of an affine space presented here is equivalent to the traditional definition that involves a set $A$ with a free and transitive action $+$ of a  $\mathbb{K}$-vector space $\overset{\rightarrow}{A}$. In particular, for all $b,c\in A$, there is a unique vector (from $b$ to $c$), and every vector in $\overset{\rightarrow}{A}$ can be obtained in this way. Furthermore, any point $a$ can be uniquely shifted or translated to another point $a + \overset{\longrightarrow}{bc}\in A$. This combination of three points equips $A$ with the abelian heap structure 
\begin{equation}\label{aff.heap}
    \langle a,b,c\rangle = a + \overset{\longrightarrow}{bc}. 
\end{equation}
Now $A$ becomes an affine $\mathbb{K}$-module in the sense of Definition~\ref{def.aff.mod} with the action
\begin{equation}\label{aff.act}
    \alpha \triangleright_a b  = a + \alpha \overset{\longrightarrow}{ab}. 
\end{equation}
Conversely, starting with an affine $\mathbb{K}$-space $A$ as in Definition~\ref{def.aff.mod} one can reconstruct the underlying vector space by picking any $o\in A$ and defining the addition of vectors and multiplication by scalars by 
\begin{equation}\label{aff.vec}
   a+b = \langle a, o,b\rangle, \qquad \alpha a = \alpha \triangleright_o a.
\end{equation}
The resulting vector space (or $\mathbb{K}$-module in case of a commutative ring rather than a field) will be denoted by $A_o$.
In this set-up, $\overset{\longrightarrow}{bc} = \langle o,b,c\rangle \in A_o$. 

\begin{rem}\label{rem.trans}
    Different choices of the zero  element in an affine $\mathbb{K}$-module $A$ lead to isomorphic $\mathbb{K}$-modules. More precisely, given $o,u\in A$, the map
    \begin{equation}\label{trans}
        \tau_o^u: A\longrightarrow A, \qquad a\longmapsto \langle a, o,u\rangle, 
    \end{equation}
    is an automorphism of heaps and an isomorphism of abelian groups $A_o\cong A_u$, known as the \textbf{translation isomorphism} (the inverse being $\tau_u^o$). Furthermore, for all $a\in A$ and $\alpha\in \mathbb{K}$,
    $$
    \begin{aligned}
        \alpha\triangleright_u\tau_o^u(a) &= \langle\alpha\triangleright_u a,\alpha\triangleright_u o, \alpha\triangleright_u u\rangle\\
        &= \langle\alpha\triangleright_u a,\alpha\triangleright_u o, o,o, u\rangle = \langle\alpha\triangleright_o a, o,  u\rangle =\tau_o^u\left(\alpha\triangleright_o a\right).
    \end{aligned}
    $$
    The first equality follows from the fact that $\alpha\triangleright_u -$ is a heap homomorphism, the second is a consequence of the Mal'cev identity and \eqref{aa}, and the penultimate equality follows by the base exchange property (c) in Definition~\ref{def.aff.mod}. This proves that $\tau_o^u$ is an isomorphism of $\mathbb{K}$-modules, $A_o\cong A_u$.
\end{rem}

If $\mathbb{K}$ is a field, then the \textbf{dimension} of an affine space $A$, written $\dim A$, is defined as the dimension of the vector space $A_o$. In view of Remark~\ref{rem.trans} the dimension does not depend on the choice of $o\in A$. 

Traditional definition of an affine map involves a pair, a function $f:A\to B$ and a (uniquely defined) linear transformation $\overset{\rightarrow}{f}:\overset{\rightarrow}{A}\to \overset{\rightarrow}{B}$, called the \textbf{linearisation} of $f$ such that $\overset{\rightarrow}{f}\left(\overset{\longrightarrow}{ab}\right) = \overset{-\!-\!-\!-\!-\!\longrightarrow}{f(a)f(b)}$. This is equivalent to say that $f$ is a homomorphism of affine spaces in the sense of Definition~\ref{def.aff.mod}. If we fix $o_A\in A$ and $o_B\in B$ and define the vector space structures $A_{o_A}$ and $B_{o_B}$as in \eqref{aff.vec}, then the linearisation of $f:A\to B$ is given by 
\begin{equation}\label{linearisation}
\overset{\rightarrow}{f} : A_{o_A} \to B_{o_B}, \qquad a\mapsto \langle f(a),f(o_A),o_B\rangle.
\end{equation}

\begin{defn}\label{def.assoc}
Let $\mathbb{K}$ be a commutative ring. An \textbf{(associative) $\mathbb{K}$-affgebra} or simply an \textbf{affgebra} is an affine $\mathbb{K}$-module $A$ together with a bi-affine (associative) multiplication $A\times A\to A$.
\end{defn}

The bi-affine property of the multiplication in an affgebra implies in particular that it is a bi-heap homomorphism, so an affgebra $A$ is also a truss \cite{Brz:tru}, \cite{Brz:par}, i.e.\ the following distributive laws hold
$$
a\langle b,c,d\rangle = \langle ab,ac,ad\rangle \quad \mbox{and}\quad \langle b,c,d\rangle a = \langle ba,ca,da\rangle,
$$
 for all $a,b,c,d\in A$. In view of the preceding interpretation of heaps as affine $\Z$-module, trusses are simply associative $\Z$-affgebras in the same was as rings are associative $\Z$-algebras.

 \begin{ex}\label{ex.affg}
   Let $A$ be an an affine $\mathbb{K}$-module. The set $\mathrm{Aff}(A)$ of all affine endomorphisms of $A$ is an affgebra with the pointwise heap operation,
   the action
   \begin{equation}\label{end.act}
     (\alpha \triangleright _f g )(a)=  \alpha \triangleright _{f(a)} g (a),
   \end{equation} 
   for all $\alpha \in \mathbb{K}$, $f,g\in \mathrm{Aff}(A)$, $a\in A$, and multiplication given by composition.
 \end{ex}
 \begin{proof}
     By Lemma~\ref{lem.aff.map}, $\mathrm{Aff}(A)$ is a heap and the general properties of endomorphism trusses affirm that $\mathrm{Aff}(A)$ is a truss. Since the action is defined pointwise, it satisfies the requirements of Definition~\ref{def.aff.mod}. It remains only to check that $\alpha \triangleright _f g$ is an affine map. That it is a heap homomorphism follows from \eqref{heap.middle}. By \cite[Lemma~3.15]{BreBrz:hea}, for all $\alpha,\beta \in \mathbb{K}$ and $w,x,y,z\in A$,
     $$
     \alpha\triangleright_{\beta\, \triangleright_x y}(\beta \triangleright_z w) = \beta\triangleright_{\alpha\, \triangleright_x z}(\alpha \triangleright_y w).
     $$
    Setting $x=f(a)$, $y=f(b)$, $z=g(a)$ and $w=g(b)$ for any $a,b\in A$ and $f,g\in \mathrm{Aff}(A)$ we obtain
    $$
    (\alpha\triangleright_f g)(\beta\triangleright_a b) = \beta\triangleright_{(\alpha\, \triangleright_f g)(a)} (\alpha \triangleright_f g)(b).
    $$
    This means that $\alpha\triangleright_f g$ is an affine map, and completes the proof of the example.
 \end{proof}
\section{Lie affgebras}\label{sec.Lie}
In this section we start by proposing the intrinsic definition of a Lie affgebra or a Lie bracket on an affine space. Next we give a number of examples of Lie affgebras, compare them to the structure introduced earlier in \cite{GraGra:Lie} and show the reduction to Lie algebras.

\begin{defn}\label{def.Lie}
   Let $A$ be an affine $\mathbb{K}$-module. A \textbf{left Lie bracket} on $A$ is a bi-affine map $[-,-]:A\times A\to A$ such that, for all $a,b,c\in A$,
   \begin{subequations}\label{left.Lie}
   \begin{equation}\label{anti}
   \big\langle [a,b],[a,a],[b,a]\big\rangle = [b,b],   
   \end{equation}
   \begin{equation}\label{l.Jacobi}
       \Big\langle [a,[b,c]], [a,a], [b,[c,a]], [b,b], [c,[a,b]]\Big\rangle = [c,c]
   \end{equation}
   \end{subequations}
   The bi-affine map $[-,-]:A\times A\to A$ is a \textbf{right Lie bracket} if it satisfies \eqref{anti} and 
   $$
   \Big\langle [[a,b],c], [a,a], [[b,c],a], [b,b], [[c,a],b]\Big\rangle = [c,c].
   $$
   An affine $\mathbb{K}$-module with the left (resp.\ right) Lie bracket is called a \textbf{left} (resp. \textbf{right}) \textbf{Lie affgebra}.

   In case that $\mathbb{K}=\Z$ (and hence the affine structure of $A$ is uniquely determined by the heap structure), we refer to a \textbf{Lie truss} rather than Lie $\Z$-affgebra.
\end{defn}

Whatever is being said about left Lie brackets and left Lie affgebras can be equivalently (symmetrically) said about right Lie brackets and affgebras. Therefore, in what follows we concentrated on the left case and drop the adjective {\em left} from the terminology.

\begin{rem}\label{rem.Lie}
  Note that the antisymmetry property \eqref{anti} can be equivalently stated as 
  $$
  \big\langle [a,b],[b,b],[b,a]\big\rangle = [a,a].
  $$
  Similarly, the order in which expressions $[a,a]$, $[b,b]$ and $[c,c]$ appear in the Jacobi identity \eqref{l.Jacobi} does not matter.
\end{rem}

\begin{ex}\label{ex.trivial}
    Let $A$ be an affine $\mathbb{K}$-module $A$ and let $\sigma:A\to A$ be an endomorphism of affine modules. Define the binary operation  $[-,-]: A\times A\to A$ by $[a,b] =\sigma(a)$. Since both the constant map and $\sigma$ are affine tranformations, this operation is bi-affine. That $[-,-]$ is a (left) Lie bracket is easily verified.  

    Now, let us consider two endomorphisms $\sigma_1, \sigma_2$. Then the corresponding Lie affgebras are isomorphic if and only if there exists an automorphism of affine spaces $f:A\to A$ such that, for all $a,b\in A$,
    $$
    \sigma_2(f(a)) = [f(a),f(b)]_2 = f([a,b]_1) = f(\sigma_1(a)),
    $$
    that is, if and only if
    $$
    \sigma_2 = f\circ \sigma_1 \circ f^{-1}.
    $$
    In particular the Lie affgebras given by, $[a,b]=a$ and $[a,b]=\mathrm{const.}$ are not isomorphic if $A$ has more than one element.
\end{ex}

\begin{prop}\label{prop.action}
Given an affine $\mathbb{K}$-module $A$  and  $\zeta\in \mathbb{K}$, define
\begin{equation}\label{Lie.action}
  [-,-]:A\times A\longrightarrow A, \qquad [a,b] = \zeta \triangleright _a b.  
\end{equation}
Then $[-,-]$ is a Lie bracket on $A$. If $\mathbb{K}$ is a field and $\dim A \geq 1$, then the Lie affgebras corresponding to $\zeta_1$ and $\zeta_2$ are isomorphic if and only if $\zeta_1 = \zeta_2$.
\end{prop}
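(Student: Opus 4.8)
The plan is to fix a point $o\in A$, pass to the associated $\mathbb{K}$-module $A_o$, and carry out all computations there; this is legitimate because \eqref{anti} and \eqref{l.Jacobi} are identities built purely from the heap operation, whose value $\langle x,y,z\rangle = x-y+z$ is the same in every retract. By Remark~\ref{rem.act.+-}, in $A_o$ the bracket takes the transparent form $[a,b]=\zeta\triangleright_a b = \zeta b - \zeta a + a = \zeta b+(1-\zeta)a$. From this formula bi-affineness is immediate: for fixed $a$ the map $b\mapsto \zeta b+(1-\zeta)a$ and for fixed $b$ the map $a\mapsto \zeta b+(1-\zeta)a$ are manifestly affine. (Intrinsically, the same two facts follow from property (a) of Definition~\ref{def.aff.mod} together with \eqref{heap.middle} for the heap-homomorphism part, and from \cite[Lemma~3.15]{BreBrz:hea} with the specialisations $x=y=a$ (respectively $z=w=b$) and \eqref{aa} for compatibility with the action.)

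Next I would verify the two bracket axioms by substitution. Using $[a,a]=a$, $[b,b]=b$, $[c,c]=c$, which hold intrinsically by \eqref{aa}, the antisymmetry \eqref{anti} reads $\big\langle \zeta b+(1-\zeta)a,\ a,\ \zeta a+(1-\zeta)b\big\rangle$; expanding $\langle x,y,z\rangle=x-y+z$, the coefficients of $a$ and $b$ collect to $0$ and $1$ respectively, giving $b=[b,b]$. For the Jacobi identity \eqref{l.Jacobi} I would compute the three nested brackets, e.g. $[a,[b,c]]=\zeta^2 c+\zeta(1-\zeta)b+(1-\zeta)a$ and its cyclic analogues, substitute into the five-fold heap expression, and collect coefficients: the $a$- and $b$-coefficients vanish while the $c$-coefficient is $1$, so the left-hand side equals $c=[c,c]$. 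These are routine linear calculations.

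For the classification I would argue as follows. If $\zeta_1=\zeta_2$ the two brackets coincide, so the identity map is an isomorphism. Conversely, suppose $f:A\to A$ is an automorphism of affine $\mathbb{K}$-modules intertwining the brackets in the sense of Example~\ref{ex.trivial}, that is $f(\zeta_1\triangleright_a b)=\zeta_2\triangleright_{f(a)}f(b)$ for all $a,b$. Since $f$ is a homomorphism of affine modules it already satisfies $f(\zeta_1\triangleright_a b)=\zeta_1\triangleright_{f(a)}f(b)$, whence $\zeta_1\triangleright_{f(a)}f(b)=\zeta_2\triangleright_{f(a)}f(b)$. As $f$ is bijective, $f(a)$ and $f(b)$ range over all of $A$, so $\zeta_1\triangleright_x y=\zeta_2\triangleright_x y$ for all $x,y\in A$. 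Linearising in $A_o$ via \eqref{act.+-} this becomes $(\zeta_1-\zeta_2)(y-x)=0$ for all $x,y$; because $\dim A\geq 1$ there are $x,y$ with $y-x\neq 0$, and since $\mathbb{K}$ is a field this forces $\zeta_1=\zeta_2$.

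The computations are all routine; the one genuinely informative step is the observation in the last paragraph that an affine automorphism automatically intertwines the action $\triangleright$ with itself, so the isomorphism condition collapses to the pointwise equality $\zeta_1\triangleright_x y=\zeta_2\triangleright_x y$ of the two actions, after which nondegeneracy of $A$ finishes the argument. The only point demanding a little care is justifying bi-affineness intrinsically rather than in a retract, which is why I would prefer to read it off the formula $\zeta b+(1-\zeta)a$ or invoke \cite[Lemma~3.15]{BreBrz:hea}.
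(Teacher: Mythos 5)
Your proposal is correct and follows essentially the same route as the paper: the paper likewise verifies the Jacobi identity by fixing $o$ and rewriting everything through $\triangleright_o$, and its isomorphism argument is word-for-word the observation you single out, namely that an affine automorphism automatically satisfies $f(\zeta_1\triangleright_a b)=\zeta_1\triangleright_{f(a)}f(b)$, so the intertwining condition collapses to $\zeta_1\triangleright_x y=\zeta_2\triangleright_x y$ and nondegeneracy of $A_o$ finishes it. The only divergence is bi-affineness, which the paper establishes intrinsically by repeated use of the base-change axiom (c) of Definition~\ref{def.aff.mod} rather than by reading off the retract formula $\zeta b+(1-\zeta)a$; your version is equally valid given the paper's stated equivalence with the traditional ``linear part plus constant'' description of affine maps, and is arguably more transparent.
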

\begin{proof}
    In view of the condition (a) in Definition~\ref{def.aff.mod} and equation \eqref{heap.middle}, $[-,-]$ is a bi-heap homomorphism. We will use the base change property (c) in Definition~\ref{def.aff.mod} repeatedly to prove that $[-,-]$ is a bi-affine transformation. First,
    $$
    \begin{aligned}
       {}[\alpha\triangleright_ab,c] &= \zeta\triangleright_{\alpha\triangleright_ab}c = \big\langle\zeta\triangleright_a c, \zeta\triangleright_a (\alpha\triangleright_ab), \alpha\triangleright_ab\big\rangle
        =\big\langle\zeta\triangleright_a c, (\zeta\alpha)\triangleright_ab), \alpha\triangleright_ab\big\rangle ,
    \end{aligned}
    $$
    by the associative law (b) in Definition~\ref{def.aff.mod}.
    On the other hand
    $$
    \begin{aligned}
        \alpha\triangleright_{[a,c]}([b,c]) &= \big\langle \alpha\triangleright_a (\zeta\triangleright_bc), \alpha\triangleright_a (\zeta\triangleright_ac), \zeta\triangleright_ac\big\rangle\\
        &= \big\langle (\alpha\zeta)\triangleright_a c, (\alpha\zeta)\triangleright_a b, \alpha\triangleright_ab, (\alpha\zeta)\triangleright_a c, \zeta\triangleright_ac\big\rangle = [\alpha\triangleright_ab,c].
    \end{aligned}
    $$
    Therefore, $[-,c]$ is an affine transformation. The other affine homomorphism property is shown similarly:
    $$
    \begin{aligned}
        {}[a,\alpha\triangleright_bc] &= \zeta\triangleright_a(\alpha\triangleright_bc)\big\rangle=
     \big\langle (\alpha\zeta)\triangleright_a c, (\alpha\zeta)\triangleright_ab , \zeta\triangleright_ab\big\rangle\\
     &= \big\langle\alpha\triangleright_a[a,c], \alpha\triangleright_a[a,b], [a,b]\big\rangle = \alpha\triangleright_{[a,b]}[a,c],
    \end{aligned}
    $$
    as required. Next we need to check the condition \eqref{left.Lie}. Note that, for all $a,b\in A$, $[a,a] = a$ by \eqref{aa}, and hence
    $$
    \big\langle [a,b],[a,a],[b,a]\big\rangle = \big\langle\zeta\triangleright_a b, a, \zeta\triangleright_a a, \zeta\triangleright_a b, b\big\rangle = b = [b,b].
    $$
    Finally, by fixing $o\in A$, and using (c) in Definition~\ref{def.aff.mod} we can express any action $\triangleright_a$ etc.\ in terms of $\triangleright_o$ to obtain
    $$
    [a,[b,c]] = \big\langle\zeta^2 \triangleright_o c, \zeta^2 \triangleright_o b, \zeta \triangleright_o b, \zeta \triangleright_o a,a\big\rangle .
    $$
    The Jacobi identity \eqref{l.Jacobi} can now be verified by varying $a,b,c$ cyclically and using the reshuffling and cancellation rules for the abelian heap operation.

    Now, assume that $\mathbb{K}$ is a field, $\dim A\geq 1$ and consider two Lie brackets on $A$,
    $$
    [a,b]_1 = \zeta_1\triangleright_a b \quad \mbox{and} \quad [a,b]_2 = \zeta_2\triangleright_a b, \qquad \mbox{for all $a,b\in A$}.
    $$
    An affine isomorphism $f:A\to A$ is an  
    isomorphism of Lie affgebras $(A,[-,-]_1) \to (A,[-,-]_2)$ if and only if
    $$
      \zeta_2\triangleright_{f(a)}f(b) = [f(a),f(b)]_2 = f([a,b]_1)= \zeta_1\triangleright_{f(a)}f(b).
    $$
Since $f$ is bijective this is equivalent to the statement that $\zeta_1\triangleright_a b  = \zeta_2\triangleright_a b $, for all $a,b\in A$. In the vector space $A_a$, the above equality implies that
$
\zeta_1b  = \zeta_2 b
$, for all $b$, and since $A_a$ is at least one-dimensional this is equivalent to $\zeta_1=\zeta_2$ as required.
\end{proof}

    The term {\em Lie affgebra} was introduced by Grabowska, Grabowski and Urba\'nski in \cite{GraGra:Lie} to describe a Lie-type structure on an affine space and its relation to Definition~\ref{def.Lie} should be clarified. In \cite{GraGra:Lie} an affine space is understood in the traditional manner as a set $A$ over the vector space $\overset{\rightarrow}{A}$. A Lie bracket on $(A,\overset{\rightarrow}{A})$ is then defined as an anti-symmetric bi-affine map
    $$
    [-,-]_v: A\times A \longrightarrow \overset{\rightarrow}{A},
    $$
    that satisfies the Jacobi identity in $\overset{\rightarrow}{A}$:
    \begin{equation}\label{Jacobi}
        \overset{\rightarrow~~~~~~~~~~}{[a,[b,c]_v]_v} + \overset{\rightarrow~~~~~~~~~~}{[b,[c,a]_v]_v} + \overset{\rightarrow~~~~~~~~~~}{[c,[a,b]_v]_v} = 0,
    \end{equation}
    where the arrow indicates the linearisation of the maps $[a,-]_v$ etc. We will refer to the map $[-,-]_v$ as to the \textbf{vector-valued Lie bracket} on $(A,\overset{\rightarrow}{A})$.

\begin{prop}\label{prop.GGU}
    Let $A$ be an affine space over the field $\mathbb{K}$ of characteristic different from 2. For any $o\in A$, there is a bijective correspondence between idempotent Lie brackets on $A$ and vector-valued Lie brackets on $(A,A_o)$.
\end{prop}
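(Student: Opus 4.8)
The plan is to fix the element $o\in A$, identify $\overset{\rightarrow}{A}$ with the vector space $A_o$ of \eqref{aff.vec}, and exhibit an explicit pair of mutually inverse assignments (the correspondence is allowed to depend on $o$, as the vector space does). To an idempotent bracket $[-,-]$ (one with $[a,a]=a$) I would assign the vector-valued map
\[
[a,b]_v:=\overrightarrow{b\,[a,b]}=\langle o,b,[a,b]\rangle\in A_o,
\]
the vector pointing from $b=[b,b]$ to $[a,b]$, which in $A_o$ is simply $[a,b]-b$; conversely, to a vector-valued bracket $[-,-]_v$ I would assign the bi-affine map $[a,b]:=\langle b,o,[a,b]_v\rangle$, i.e.\ the point $b$ translated by the vector $[a,b]_v$. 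A one-line heap cancellation, $\langle b,o,\langle o,b,[a,b]\rangle\rangle=[a,b]$, shows these are mutually inverse, so the whole content of the proposition is that each assignment maps one class into the other.

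Bi-affinity is immediate in both directions, since in $A_o$ the assignments only add or subtract the bi-affine map $(a,b)\mapsto b$. The antisymmetry of $[-,-]_v$ matches the idempotency of $[-,-]$: in $A_o$ the intrinsic antisymmetry \eqref{anti} reads $[a,b]+[b,a]=[a,a]+[b,b]$, so for an idempotent bracket $[a,b]_v+[b,a]_v=[a,b]+[b,a]-a-b=0$. Running this backwards is the one place where $\mathrm{char}\,\mathbb{K}\neq 2$ is essential: if $[-,-]_v$ is antisymmetric then $[a,a]_v=-[a,a]_v$ forces $[a,a]_v=0$ \emph{because $2$ is invertible}, whence the reconstructed bracket satisfies $[a,a]=a+[a,a]_v=a$, i.e.\ it is idempotent, and \eqref{anti} then follows at once. (In characteristic $2$ antisymmetry degenerates to symmetry and this link between idempotency and antisymmetry breaks, which is why the correspondence is stated only away from $2$.)

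The substantive step, and the one I expect to be the main obstacle, is the equivalence of the two Jacobi conditions. Under idempotency the left Jacobi identity \eqref{l.Jacobi} collapses in $A_o$ to $\sum_{\mathrm{cyc}}[a,[b,c]]=a+b+c$. I would substitute $[x,y]=y+[x,y]_v$, so that
\[
[a,[b,c]]=[b,c]+[a,[b,c]]_v=c+[b,c]_v+[a,c]_v+\overrightarrow{[a,-]_v}\big([b,c]_v\big),
\]
the last equality expanding the affine map $[a,-]_v$ at the point $c+[b,c]_v$ through its linearisation \eqref{linearisation}. Summing cyclically, the base points $c$ contribute $a+b+c$; the terms $[a,c]_v$, summed cyclically, give $-\sum_{\mathrm{cyc}}[b,c]_v$ by antisymmetry of $[-,-]_v$, so these two families cancel; and the surviving terms are exactly the left-hand side of the vector-valued Jacobi identity \eqref{Jacobi}. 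Thus $\sum_{\mathrm{cyc}}[a,[b,c]]-(a+b+c)$ equals the vector-valued Jacobiator, and \eqref{l.Jacobi} holds for $[-,-]$ precisely when \eqref{Jacobi} holds for $[-,-]_v$.

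What makes this work is the choice of anchor: the cancellation of the $[a,c]_v$ and $[b,c]_v$ families relies on the bracket being tied to its \emph{second} argument $b$. Anchoring instead at the first argument, or at the midpoint $\tfrac12\triangleright_a b$, would leave an uncancelled multiple of $\sum_{\mathrm{cyc}}[a,b]_v$, so the two Jacobiators would differ and the assignments would fail to respect the Lie conditions; pinning down this anchor is the crux of the argument. With the anchor fixed, the two assignments are well-defined, mutually inverse, and respect antisymmetry/idempotency together with both Jacobi identities, giving the desired bijection.
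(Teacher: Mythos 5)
Your proposal is correct and follows essentially the same route as the paper: the same assignments $[a,b]_v=\langle [a,b],b,o\rangle=[a,b]-b$ and $[a,b]=[a,b]_v+b$, the same matching of idempotency/antisymmetry (with characteristic $\neq 2$ entering exactly where you put it), and the same expansion $[a,[b,c]]=\overset{\rightarrow~~~~~~~~~~}{[a,[b,c]_v]_v}+[a,c]_v+[b,c]_v+c$ whose cyclic sum identifies the two Jacobiators. The only cosmetic difference is that you run the Jacobi comparison symmetrically in one computation, whereas the paper writes out the two directions separately; the content is identical.
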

\begin{proof}
    Assume that $[-,-]$ is an idempotent Lie bracket on $A$, that is $[a,a]=a$, for all $a\in A$, and, for all $a,b\in A$, define $[a,b]_v\in A_o$ by
    $$
     [a,b]_v = \langle [a,b],b,o\rangle = [a,b]-b.
    $$
    Then, using the idempotent property of $[-,-]$ and \eqref{anti} we obtain
    $$
    \begin{aligned}
        {}[a,b]_v + [b,a]_v &= [ab]-b+ [b,a] - a = 
        \langle [a,b],[b,b], [b,a]\rangle  - a = [a,a]-a=o,
    \end{aligned}
    $$
    so the bracket is anti-symmetric. The linearisation of $[a,-]_v$ comes out as
    $$
    \begin{aligned}
      \overset{\rightarrow~~~~~~~~~~}{[a,[b,c]_v]_v} &= \overset{\rightarrow~~~~~~~~~~~~~~~}{[a,\langle [b,c],c,o\rangle]_v} = \langle [a,[b,c]]_v, [a,c]_v,o\rangle \\
      &= \langle [a,[b,c]], [b,c],o, [a,c]_v,o\rangle \\
      &= \langle [a,[b,c]], c,o,o,c,[b,c],o, [a,c]_v,o\rangle\\
      &= \langle [a,[b,c]], c,o, [b,c]_v,o,[a,c]_v,o\rangle, 
      \\
      &= \langle [a,[b,c]], [c,c],o, [b,c]_v,o,[a,c]_v,o\rangle\\
      &= [a,[b,c]] - [c,c] -  [b,c]_v - [a,c]_v,
    \end{aligned}
    $$
    where the last expression is written in $A_o$.  The Jacobi identity \eqref{l.Jacobi} written in $A_o$ reads:
    \begin{equation}\label{Jacobi.o}
    [a,[b,c]]- [a,a]+ [b,[c,a]]- [b,b]+ [c,[a,b]] -[c,c] =o,
         \end{equation}
    and its combination with the anti-symmetry of $[-,-]_v$ implies the Jacobi identity \eqref{Jacobi}. 

    In the opposite direction, one sets
    $$
    [a,b] = [a,b]_v +b.
    $$
    This is an idempotent by the antisymmetry of $[-,-]_v$ (and the fact that the characteristic of the field is different from two). Antisymmetry also implies that $[-,-]$ satisfies condition \eqref{anti}. Noting that for an affine map $f:A\to A$ in $(A,A_o)$ and $a\in A_o$ and $b\in A$,
    $$
    f(a+b) = \overset{\to}{f}(a) + f(b),
    $$
    we can compute 
    $$
    \begin{aligned}
        {}[a,[b,c]] &= [a,[b,c]]_v + [b,c] = [a,[b,c]_v + c]_v + [b,c]_v +c \\
        &= \overset{\rightarrow~~~~~~~~~~}{[a,[b,c]_v]_v} +  [a,c]_v + [b,c]_v +[c,c].
    \end{aligned}
    $$
    Now the Jacobi identity \eqref{Jacobi} together with the antisymmetry of $[-,-]_v$ imply the Jacobi identity \eqref{Jacobi.o}.
    \end{proof}

\begin{ex}\label{ex.GGU}
    In view of Proposition~\ref{prop.GGU}, Proposition~\ref{prop.action} gives a family of vector-valued Lie brackets on an affine space $(A,\overset{\rightarrow}{A})$ labelled by scalars $\alpha \in \mathbb{K}$:
    $$
    [a,b]_v = \alpha \overset{\longrightarrow}{ab}, \qquad a,b\in A.
    $$
\end{ex}

Every associative algebra is a Lie algebra with the commutator bracket. An analogous statement is true for associative affgebras.

\begin{prop}\label{prop.comm}
    An associative $\mathbb{K}$-affgebra $A$ is a Lie affgebra with the bracket
    \begin{equation}\label{comm}
           [a,b] = \langle ab,ba,b\rangle,
    \end{equation}
    for all $a,b\in A$. 
\end{prop}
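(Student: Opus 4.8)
The plan is to verify the three requirements for $[-,-]$ to be a left Lie bracket in the sense of Definition~\ref{def.Lie}: bi-affineness, the antisymmetry identity \eqref{anti}, and the Jacobi identity \eqref{l.Jacobi}.

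For bi-affineness I would fix one argument at a time and invoke Lemma~\ref{lem.aff.map}. With $b$ fixed, the maps $a\mapsto ab$ and $a\mapsto ba$ are affine because the multiplication of $A$ is bi-affine (Definition~\ref{def.assoc}), while $a\mapsto b$ is constant and hence affine; therefore $[-,b]=\langle (-)b,b(-),b\rangle$ is affine by Lemma~\ref{lem.aff.map}. With $a$ fixed, the maps $b\mapsto ab$, $b\mapsto ba$ and $b\mapsto b$ are affine (the last being the identity), so $[a,-]$ is affine by the same lemma. Thus $[-,-]$ is bi-affine.

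For the two identities I would pass to a retract $A_o$ as in Remark~\ref{rem.act.+-}, writing every heap word as $\langle x,y,z\rangle=x-y+z$ following \eqref{+-}, and crucially use that the multiplication, though only bi-affine rather than bilinear, still obeys the truss distributive laws $a\langle b,c,d\rangle=\langle ab,ac,ad\rangle$ and $\langle b,c,d\rangle a=\langle ba,ca,da\rangle$ recorded after Definition~\ref{def.assoc}. Since every heap word to which the multiplication is applied has net degree one, these laws let me distribute without ever needing genuine additivity. Antisymmetry is then immediate: by the heap axioms and \eqref{aa} one has $[a,a]=\langle aa,aa,a\rangle=a$ and $[b,b]=b$, so $\langle[a,b],[a,a],[b,a]\rangle$ expands in $A_o$ to $ab-ba+b-a+ba-ab+a=b=[b,b]$. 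As the final equality is between heap words, it is independent of the choice of $o$.

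The heart of the proof is the Jacobi identity, and this is where I expect the main (though purely computational) obstacle. Using associativity to write all products as ordered triples, the truss laws give $[a,[b,c]]=abc-acb+ac-bca+cba-ca+bc-cb+c$ in $A_o$, with the cyclic images $[b,[c,a]]$ and $[c,[a,b]]$ obtained by the substitution $a\to b\to c\to a$. Summing the left-hand side of \eqref{l.Jacobi}, which in $A_o$ reads $[a,[b,c]]-a+[b,[c,a]]-b+[c,[a,b]]$ (using $[a,a]=a$, $[b,b]=b$, and Remark~\ref{rem.Lie} to fix the order of the idempotent terms), I would check that the six degree-three monomials cancel in antisymmetric pairs, that the six degree-two monomials cancel likewise, and that the surviving first-order terms collapse to $c-a+a-b+b=c=[c,c]$. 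The only delicate point is the bookkeeping of signs across the cyclic permutation together with the repeated use of associativity; no structural input beyond the truss distributive laws and \eqref{aa} is required, and independence of $o$ again follows because the statement is an equality of heap words.
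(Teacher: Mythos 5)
Your proposal is correct and follows essentially the same route as the paper: bi-affineness via Lemma~\ref{lem.aff.map}, a direct check of \eqref{anti} using idempotency, and the expansion $[a,[b,c]]=\langle abc,acb,ac,bca,cba,ca,bc,cb,c\rangle$ whose cyclic images cancel in degrees three and two, leaving $[c,c]=c$. You merely spell out the bookkeeping (truss distributivity over odd-length heap words, independence of the retract point $o$) that the paper leaves implicit.
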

\begin{proof}
   The maps $[a,-]$ and $[-,b]$ induced by the bracket \eqref{comm} are affine homomorphisms by Lemma~\ref{lem.aff.map}.
 
    Checking the anti-symmetry property \eqref{anti} is straightforward. For the Jacobi identity, note that the bracket \eqref{comm} is an idempotent operation and that
    $$
    [a,[b,c]] = \langle abc, acb, ac, bca, cba, ca,bc,cb,c\rangle.
    $$
    Permuting  this cyclically one concludes that the Jacobi identity \eqref{l.Jacobi} holds.
\end{proof}

Again, as is the case for Lie algebras, a commutator Lie bracket needs not be defined on an associative algebra, it is sufficient to consider {\em pre-Lie algebras} introduced in \cite{Vin:the} and \cite{Ger:coh} (see \cite{Man:sho} for a review).

\begin{defn}\label{def.pre}
A {\em left pre-Lie affgebra} is an affine space $A$ together with the bi-affine map $\cdot : A\times A\longrightarrow A$, such that, for all $a,b,c\in A$,
\begin{equation}\label{left.pre}
    (a\cdot b)\cdot c = \big\langle a\cdot (b\cdot c), b\cdot (a\cdot c), (b\cdot a)\cdot c \big\rangle.
\end{equation}

Similarly, a {\em right pre-Lie affgebra} is an affine space with bi-affine binary operation $\cdot$, satisfying the condition
\begin{equation}\label{right.pre}
   a\cdot (b\cdot c)  = \big \langle (a\cdot b)\cdot c, (a\cdot c )\cdot b  , a\cdot (c\cdot b)\big\rangle,
\end{equation}
for all $a,b,c\in A$.
\end{defn}
We note in passing that, when written in terms of addition $a+b = \langle a,o,b\rangle$ the conditions \eqref{left.pre} and \eqref{right.pre} coincide exactly with the corresponding pre-Lie algebras conditions.

\begin{prop}\label{prop.pre}
    Let $(A,\cdot)$ be a right (or left) pre-Lie affgebra. Then $A$ is a Lie affgebra with the bracket
    $$
    [a,b] = \langle a\cdot b,b\cdot a,b\rangle,
    $$
    for all $a,b\in A$.
\end{prop}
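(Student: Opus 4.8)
The plan is to verify the three requirements of Definition~\ref{def.Lie} in turn: that $[-,-]$ is bi-affine, that it satisfies the antisymmetry condition \eqref{anti}, and that it satisfies the Jacobi identity \eqref{l.Jacobi}. Bi-affineness is immediate. Since $\cdot$ is bi-affine, for each fixed $b$ the maps $a\mapsto a\cdot b$, $a\mapsto b\cdot a$ and the constant $a\mapsto b$ are all affine, so by Lemma~\ref{lem.aff.map} their heap combination $a\mapsto\langle a\cdot b, b\cdot a, b\rangle$ is affine; the same argument applies in the second slot. For antisymmetry, the heap cancellation rule gives $[a,a]=\langle a\cdot a, a\cdot a, a\rangle = a$ for every $a$, and substituting $[a,a]=a$, $[b,b]=b$ and expanding $[a,b]$ and $[b,a]$ reduces \eqref{anti} to an elementary reshuffling. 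This step uses nothing about $\cdot$ beyond the definition of the bracket, and in particular holds in every characteristic.

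For the Jacobi identity I would follow the pattern of Proposition~\ref{prop.comm}: fix an arbitrary $o\in A$ and carry out the computation in the additive notation of $A_o$, in which $\langle x,y,z\rangle = x-y+z$ and $[a,b]=a\cdot b - b\cdot a + b$. Because $\cdot$ is a bi-heap homomorphism, both $a\cdot -$ and $-\cdot a$ distribute over the heap operation, so expanding $[a,[b,c]]$ yields the nine-term expression
\begin{equation*}
[a,[b,c]] = a\cdot(b\cdot c) - a\cdot(c\cdot b) + a\cdot c - (b\cdot c)\cdot a + (c\cdot b)\cdot a - c\cdot a + b\cdot c - c\cdot b + c,
\end{equation*}
together with its cyclic analogues $[b,[c,a]]$ and $[c,[a,b]]$. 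Since $[a,a]=a$ and $[b,b]=b$, the left-hand side of \eqref{l.Jacobi} equals the sum of these three brackets with $a$ and $b$ subtracted off, so the whole claim reduces to showing that this sum equals $a+b+c$.

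I would then sort the twenty-seven terms of the sum into three groups. The three single letters contribute exactly $a+b+c$. The twelve ``double'' products $x\cdot y$ cancel in pairs purely combinatorially, precisely as in the associative case. The twelve ``nested'' triple products split into six left-nested terms $x\cdot(y\cdot z)$ and six right-nested terms $(x\cdot y)\cdot z$, and the crux of the argument—the only place where the pre-Lie hypothesis enters—is that these two families cancel against one another. In additive form the right pre-Lie identity \eqref{right.pre} reads $x\cdot(y\cdot z) - x\cdot(z\cdot y) = (x\cdot y)\cdot z - (x\cdot z)\cdot y$; applying it to the three antisymmetrised left-nested pairs $a\cdot(b\cdot c)-a\cdot(c\cdot b)$, $b\cdot(c\cdot a)-b\cdot(a\cdot c)$, $c\cdot(a\cdot b)-c\cdot(b\cdot a)$ converts them exactly into the negatives of the six right-nested terms, so the nested contribution vanishes. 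The left pre-Lie case is handled symmetrically, using \eqref{left.pre} to rewrite the right-nested terms instead; one checks that the converted terms are the termwise negatives of the left-nested family. In either case the sum is $a+b+c$, whence the left-hand side of \eqref{l.Jacobi} is $c=[c,c]$, as required.

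I expect the main obstacle to be organisational rather than conceptual: keeping track of the full expansion and matching the two nested families under the correct instance of the pre-Lie identity. The genuine content is a single application of \eqref{right.pre} (or \eqref{left.pre}) per cyclic term, and it is worth emphasising that no linearity is used anywhere—the entire computation rests only on $\cdot$ being a bi-heap homomorphism, so that the additive notation is merely a bookkeeping device for a base-independent heap identity.
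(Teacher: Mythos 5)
Your proposal is correct and follows essentially the same route as the paper: expand $[a,[b,c]]$ into the nine-term heap expression, use the pre-Lie identity to bring all triple products to a uniform nesting, and then cancel the cyclic combinations exactly as in the associative (commutator) case. The extra bookkeeping you supply (the three groups of terms and the explicit matching of the two nested families) is a faithful elaboration of what the paper leaves implicit.
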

\begin{proof}
    The only point worth mentioning here is that, in the case of a right pre-Lie affgebra
    $$
    \begin{aligned}
        {}[a,[b,c]] &= \big\langle a\cdot (b\cdot c), a\cdot (c\cdot b), a\cdot c, (b\cdot c)\cdot a, (c\cdot b)\cdot a, c\cdot a,b\cdot c,c\cdot b,c\big\rangle\\
        &= \big\langle (a\cdot b)\cdot c, (a\cdot c)\cdot b, a\cdot c, (b\cdot c)\cdot a, (c\cdot b)\cdot a, c\cdot a,b\cdot c,c\cdot b,c\big\rangle,
    \end{aligned}
    $$
    by \eqref{right.pre}. Once the brackets are redistributed in a uniform way as above, the cancellation of the cyclic combinations follows by the same arguments as in the associative case. The left pre-Lie affgebra condition \eqref{left.pre} allows one to distribute the brackets in double operation to the form $-\cdot (-\cdot -)$.
\end{proof}

\begin{defn}\label{def.deriv}
Let $A$ be a non-necessarily associative $\mathbb{K}$-affgebra and let $\sigma:A\to A$ be an affine map such that, for all $a,b\in A$,
\begin{equation}\label{sigma}
    \sigma(ab) = \big\langle \sigma (a)b, \sigma(ab), a\sigma (b)\big\rangle.
\end{equation}
A \textbf{derivation along $\sigma$} is an affine homomorphism $X: A\to A$, such that, 
\begin{subequations}\label{deriv}
    \begin{equation}\label{deriv.comm}
    X\sigma = \sigma X,
\end{equation}
\begin{equation}\label{deriv.Leibniz}
X(ab) = \big\langle X(a)b, \sigma(ab), aX(b)\big\rangle, \qquad \mbox{for all $a,b\in A$}.
\end{equation}
\end{subequations}
The set of all derivations along $\sigma$ on $A$ is denoted by $\mathrm{Der}_\sigma(A)$. Note that $\sigma \in \mathrm{Der}_\sigma(A)$.
\end{defn}
\begin{ex}\label{ex.deriv}
   Let $[-,-]$ be the commutator Lie bracket \eqref{comm} on  an associative $\mathbb{K}$-affgebra $A$. Then, for all $a\in A$, the map $X_a:A\longrightarrow A$, $b\longmapsto [a,b]$ is a derivation along the identity on $A$.
\end{ex}
\begin{proof}
    Obviously $X_a$ is an affine transformation. Checking the derivation property (with $\sigma = \mathrm{id}$) is straightforward.
\end{proof}
\begin{ex}\label{ex.abelian}
    Let $A$ be an affine $\mathbb{K}$-module, take $o\in A$ and view it as an associative affgebra with multiplication
    $$
    ab = \langle a,o,b\rangle  = a+b,
    $$
    that is the multiplication of $A$ is addition in the $\mathbb{K}$-module $A_o$ (with the $\mathbb{K}$-action $\alpha a = \alpha\triangleright_oa$). Then $\mathrm{Der}_\id(A) = \mathrm{End}_{\mathbb{K}}(A_o)$.
\end{ex}
\begin{proof}
    An affine endomorphism $X:A\to A$ is a derivation along the identity if and only if
    $$
    X(a+b) = X(ab) = \big\langle X(a), o, b, a, o ,b , a, o, X(b) \big\rangle  = \big\langle X(a), o, X(b) \big\rangle = X(a) +X(b),
    $$
    i.e., $X$ is an additive map. In particular $X(o) =o$, and since $X$ is an affine transformation, for all $\alpha \in \mathbb{K}$ and $a\in A_o$
    $$
    X(\alpha a) = \alpha \triangleright_{X(o)}X(a) = \alpha X(o).
    $$
    Therefore, $X$ is a derivation along the identity if and only if it is an endomorphism of $\mathbb{K}$-modules.
    \end{proof}
\begin{prop}\label{prop.Lie.deriv}
    Let $L$ be a Lie affgebra with an idempotent bracket.
      Then, for all $a\in L$, the map 
    $$
    X_a: L\longrightarrow L, \qquad b\longmapsto [a,b],
    $$
    is a derivation on $L$ along the identity.
\end{prop}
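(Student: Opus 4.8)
The plan is to unravel Definition~\ref{def.deriv} in the case at hand, where the affgebra multiplication is the Lie bracket itself and $\sigma=\id$. First I would observe that $X_a=[a,-]$ is an affine homomorphism because the bracket is bi-affine, that the compatibility \eqref{deriv.comm} reads $X_a\,\id=\id\,X_a$ and so holds trivially, and that the condition \eqref{sigma} degenerates to $[u,v]=\langle[u,v],[u,v],[u,v]\rangle$ when $\sigma=\id$. Hence the entire content of the proposition is the Leibniz condition \eqref{deriv.Leibniz}, which here becomes the heap identity
\begin{equation*}
[a,[u,v]]=\big\langle [[a,u],v],\,[u,v],\,[u,[a,v]]\big\rangle,\qquad\text{for all }u,v\in L.
\end{equation*}
This is visibly an affine reshuffling of the Jacobi identity \eqref{l.Jacobi}, so the work is to extract it from \eqref{l.Jacobi} and \eqref{anti}.

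To do so I would fix a base point $o\in L$ and compute in the vector space $L_o$, where $\langle x,y,z\rangle=x-y+z$; since the target identity is a heap equation, its validity is independent of this choice. Idempotency gives $[x,x]=x$, so the antisymmetry \eqref{anti} becomes $[x,y]+[y,x]=x+y$, while the Jacobi identity \eqref{l.Jacobi} for the triple $(a,u,v)$ reads $[a,[u,v]]-a+[u,[v,a]]-u+[v,[a,u]]=v$. The two moves are: rewrite $[v,a]=a+v-[a,v]$ by antisymmetry and push it through the affine map $[u,-]$ (legitimate since $a+v-[a,v]$ is an affine combination), which yields $[u,[v,a]]=[u,a]+[u,v]-[u,[a,v]]$; and use antisymmetry on the outer bracket to get $[v,[a,u]]=v+[a,u]-[[a,u],v]$. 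Substituting both into the Jacobi identity and cancelling via $[u,a]+[a,u]=u+a$ collapses everything to $[a,[u,v]]+[u,v]-[u,[a,v]]-[[a,u],v]=o$, which is exactly the displayed Leibniz identity.

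I expect the only real obstacle to be the affine (rather than linear) bookkeeping. Because $\sigma=\id$, the middle slot of the Leibniz rule carries the genuine term $[u,v]=\sigma([u,v])$ rather than a neutral element, so the cancellations (the $-a+a$ and $u-u$ arising from the idempotency terms) must be tracked with care, and each intermediate substitution must be arranged as an affine combination with coefficients summing to one before the affine maps $[u,-]$ and $[-,v]$ may be distributed over it. Appealing to Remark~\ref{rem.Lie} to reorder the idempotent terms $[a,a]$, $[u,u]$, $[v,v]$ streamlines this bookkeeping, and base-independence of the final heap identity removes any dependence on the auxiliary choice of $o$.
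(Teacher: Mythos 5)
Your proposal is correct and follows essentially the same route as the paper: starting from the Jacobi identity for $(a,u,v)$, using antisymmetry once inside the affine map $[u,-]$ to turn $[u,[v,a]]$ into $[u,[a,v]]$ and once on the outer bracket to turn $[v,[a,u]]$ into $[[a,u],v]$, and then cancelling with idempotency to reach the Leibniz identity $X_a([u,v])=\big\langle [X_a(u),v],[u,v],[u,X_a(v)]\big\rangle$. The only cosmetic difference is that the paper performs these manipulations directly in heap notation while you pass to the retract $L_o$; as you note, the resulting identity is a heap equation and hence independent of that choice.
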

\begin{proof}
    By the definition of a Lie bracket, the map $X_a$ is an affine transformation. Take any $b,c\in L$, and using the Jacobi identity \eqref{l.Jacobi}, the antisymmetry \eqref{anti}, and the idempotent property of the Lie bracket, compute
    $$
    \begin{aligned}
        X_a([b,c]) &= [a,[b,c]] = \Big\langle  [a,a], [b,[c,a]], [b,b], [c,[a,b]], [c,c]\Big\rangle \\
        &= \Big\langle  [a,a], [b,[c,c]], [b,[a,c]], [b,[a,a]], [b,b], [[a,b],[a,b]], [[a,b], c],  [c,c], [c,c]\Big\rangle \\
        &= \Big\langle  a, [b,c], [b,X_a(c)], [b,a], [b,b], [a,b], [X_a(b), c]\Big\rangle\\
        &=\Big\langle  [X_a(b), c], [b,c], [b,X_a(c)], [a,a], a\Big\rangle = \Big\langle  [X_a(b), c], [b,c], [b,X_a(c)]\Big\rangle,
    \end{aligned}
    $$
    as needed.
\end{proof}
\begin{thm}\label{thm.deriv}
Let $A$ be a non-necessarily associative $\mathbb{K}$-affgebra  and let $\sigma:A\to A$ be an affine map. Then
\begin{zlist}
    \item Provided $\sigma$ satisfies \eqref{sigma}, $\mathrm{Der}_\sigma(A)$ is a Lie affgebra with the $\mathbb{K}$-affine structure arising from $\mathrm{Aff}(A)$ and the Lie bracket
\begin{equation}\label{deriv.Lie}
    [X,Y] = \big\langle XY, YX, \sigma \big\rangle,
\end{equation}
    for all $X,Y\in \mathrm{Der}_\sigma(A)$.
    \item If the affine $\mathbb{K}$-submodule
    $$
    \mathrm{Aff}(A)_\sigma := \{X\in \mathrm{Aff}(A)\; |\; \mbox{$X$ satisfies \eqref{deriv.Leibniz}}\} \subseteq \mathrm{Aff}(A),
    $$
    is a Lie affgebra with bracket \eqref{deriv.Lie}, then $\sigma \in \mathrm{Aff}(A)_\sigma$ and $\mathrm{Aff}(A)_\sigma = \mathrm{Der}_\sigma(A)$.
\end{zlist}

\end{thm}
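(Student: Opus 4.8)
The plan is to fix a retraction of $\mathrm{Aff}(A)$ at a base endomorphism, so that the heap operation reads $\langle f,g,h\rangle=f-g+h$ and the twisted bracket becomes $[X,Y]=XY-YX+\sigma$ (with juxtaposition denoting composition), and to reduce the affine identities to additive manipulations governed by the truss distributive laws of $\mathrm{Aff}(A)$ (Example~\ref{ex.affg}). The first task in part (1) is to show that $\mathrm{Der}_\sigma(A)$ is an affine $\mathbb{K}$-submodule of $\mathrm{Aff}(A)$. The clean way is to note that, for each fixed $a,b\in A$, evaluation $X\mapsto X(c)$ is an affine map $\mathrm{Aff}(A)\to A$ and the affgebra multiplications $L_a,R_b$ are affine, so by Lemma~\ref{lem.aff.map} both sides of the Leibniz condition \eqref{deriv.Leibniz} and of the commutation condition \eqref{deriv.comm} are affine functions of $X$. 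Each condition thus cuts out an equaliser of two affine maps, which is an affine submodule, and $\mathrm{Der}_\sigma(A)$ is the intersection of all of these, nonempty since it contains $\sigma$. This makes heap combinations $\langle X,Y,Z\rangle$ and actions $\alpha\triangleright_X Y$ of derivations automatically derivations; done by hand, the only delicate point in the action case is that the non-linear corrections of the affine multiplication, of the shape $\alpha(vb)-(\alpha v)b$, appear symmetrically and cancel.

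Next I would verify that $[X,Y]=\langle XY,YX,\sigma\rangle$ is a bi-affine map into $\mathrm{Der}_\sigma(A)$. Bi-affineness is immediate because composition is bi-affine in $\mathrm{Aff}(A)$ and a heap combination with constant third entry $\sigma$ is affine by Lemma~\ref{lem.aff.map}. That $[X,Y]$ is again a derivation splits into affineness (clear), the commutation $[X,Y]\sigma=\sigma[X,Y]$, which follows at once from $X\sigma=\sigma X$ and $Y\sigma=\sigma Y$ (so that $XY\sigma=\sigma XY$), and the Leibniz rule, which is the substantial computation. Expanding $XY(ab)=X(Y(ab))$ by the Leibniz rule for $Y$ and then for $X$, the cross-terms $Y(a)X(b)$ and $X(a)Y(b)$ cancel against those coming from $YX(ab)$, the discrepancy $2\sigma(ab)-\sigma(a)b-a\sigma(b)$ vanishes by \eqref{sigma}, and the residual twist terms cancel once $X\sigma(ab)$ and $Y\sigma(ab)$ are rewritten as $\sigma X(ab)$ and $\sigma Y(ab)$ via commutation and then expanded again by \eqref{deriv.Leibniz}.

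The Lie axioms are then short. The heap law $\langle c,c,d\rangle=d$ gives $[X,X]=\sigma$ for every $X$, whence antisymmetry \eqref{anti} reduces to $(XY-YX+\sigma)-\sigma+(YX-XY+\sigma)=\sigma=[Y,Y]$, which is clear. For the Jacobi identity I would compute $[X,[Y,Z]]=XYZ-XZY-YZX+ZYX+(X\sigma-\sigma X)+\sigma$; here commutation kills the term $X\sigma-\sigma X$, and summing cyclically all composition monomials cancel while the three copies of $\sigma$ survive, so that the left-hand side of \eqref{l.Jacobi} equals $3\sigma-2\sigma=\sigma=[Z,Z]$, as required.

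For part (2) I would reuse the very same expansion, observing that for arbitrary affine endomorphisms (with no appeal to Leibniz or commutation) the left-hand side of \eqref{l.Jacobi} equals $\sigma+(X\sigma-\sigma X)+(Y\sigma-\sigma Y)+(Z\sigma-\sigma Z)$. Since $\mathrm{Aff}(A)_\sigma$ is assumed to be a Lie affgebra under \eqref{deriv.Lie}, it is a nonempty affine module closed under the bracket, so the identity $[W,W]=\sigma$ forces $\sigma\in\mathrm{Aff}(A)_\sigma$, which is exactly the statement that $\sigma$ satisfies \eqref{sigma}. The inclusion $\mathrm{Der}_\sigma(A)\subseteq\mathrm{Aff}(A)_\sigma$ is trivial, and for the reverse I would evaluate the now-valid Jacobi identity at $Y=Z=\sigma$: the terms $Y\sigma-\sigma Y$ and $Z\sigma-\sigma Z$ vanish, leaving $X\sigma-\sigma X=o$, so every $X\in\mathrm{Aff}(A)_\sigma$ commutes with $\sigma$ and hence lies in $\mathrm{Der}_\sigma(A)$. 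I expect the Leibniz verification for $[X,Y]$ to be the main obstacle: it is the unique step in which all the hypotheses on $\sigma$ and on the derivations must cooperate, and correctly cancelling the $\sigma$-twisted product terms, only after invoking both \eqref{sigma} and the commutation \eqref{deriv.comm}, is where an error is most likely to intrude; closure under the scalar action is a lesser nuisance of the same non-linear origin, which the equaliser viewpoint largely sidesteps.
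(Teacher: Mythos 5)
Your proposal is correct and follows essentially the same route as the paper: the same truss-distributivity expansion for the Leibniz rule of $[X,Y]$ (cross-terms cancelling, with \eqref{sigma} and \eqref{deriv.comm} absorbing the residual twist terms), the same short antisymmetry and Jacobi verifications built on $[X,X]=\sigma$, and the same strategy in part (2) of extracting $\sigma\in\mathrm{Aff}(A)_\sigma$ from closure under the bracket and $X\sigma=\sigma X$ from the Jacobi identity. Your only departures are minor streamlinings: the equaliser-of-affine-maps argument for closure of $\mathrm{Der}_\sigma(A)$ under the $\mathbb{K}$-action replaces the paper's direct pointwise computation, and specialising the Jacobi identity at $Y=Z=\sigma$ yields $X\sigma=\sigma X$ a little more cleanly than the paper's choice $Y=Z$.
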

\begin{proof}
    (1) First we need to check whether $\mathrm{Der}_\sigma(A)$ is closed under the $\mathbb{K}$-action \eqref{end.act} (it is clear that $\mathrm{Der}_\sigma(A)$ is a heap with the pointwise operation). In view of the base change property (c) in Definition~\ref{def.aff.mod}, actions with different bases (elements under the symbol $\triangleright$) can be converted to the action with a fixed base. Hence, take $o\in A$, then for all $X,Y\in \mathrm{Der}_\sigma(A)$, $a\in A$ and $\alpha \in \mathbb{K}$,
    $$
    (\alpha\triangleright_X Y)(a) =  \big\langle \alpha \triangleright_o Y(a),  \alpha \triangleright_o X(a), X(a)\big\rangle .
    $$
    To simplify notation further we will denote $\triangleright_o$ by $\cdot$. We compute, for all $a,b\in A$,
    $$
    \begin{aligned}
       (\alpha\triangleright_X Y)(ab) &=\big\langle \alpha \cdot Y(ab),  \alpha \cdot X(ab), X(ab)\big\rangle \\
       &= \big\langle \alpha \cdot (Y(a)b),  \alpha \cdot \sigma(ab), \alpha \cdot (aY(b)),\\
       & ~~~~~\alpha \cdot (X(a)b),  \alpha \cdot \sigma(ab), \alpha \cdot (aX(b)), X(a)b, \sigma(ab), a X(b) \big\rangle\\
       &= \big\langle (\alpha \cdot Y(a))b, (\alpha \cdot X(a))b , X(a)b, \sigma(ab), a\alpha \cdot Y(b),
         a\alpha \cdot X(b), a X(b) \big\rangle\\
         &= \Big\langle \big\langle\alpha \cdot Y(a), \alpha \cdot X(a) , X(a)\big\rangle b, \sigma(ab), a\big\langle\alpha \cdot Y(b),
         \alpha \cdot X(b), X(b) \big\rangle\Big\rangle\\
         &= \big\langle(\alpha\triangleright_X Y)(a)b, \sigma(ab), a(\alpha\triangleright_X Y)(b)\big\rangle,
    \end{aligned}
    $$
    which proves that $\alpha\triangleright_X Y$ is a derivation along $\sigma$ as required.

    Next we need to check that $[X,Y]$ is a derivation along $\sigma$. Since all the maps involved in the definition of 
    $[X,Y]$ are  homomorphisms of affine modules,  $[X,Y]$ is an affine module homomorphism as well by Lemma~\ref{lem.aff.map}. 
      The derivation property is checked as follows. For all $a,b\in A$,
    $$
    \begin{aligned}
        {}[X,Y]&(ab) = \big\langle XY(ab), YX(ab), \sigma(ab)\big\rangle\\
        &= \big\langle X(aY(b)), X\sigma(ab), X(Y(a)b), Y(aX(b)), Y\sigma (ab), Y(X(a)b), \sigma (ab)\big\rangle\\
        &= \big\langle X(a)Y(b),\sigma(aY(b)), aXY(b), X\sigma(ab), XY(a)b, \sigma (Y(a)b), Y(a)X(b),\\
        &
        Y(a)X(b),  \sigma(aX(b)), aYX(b), Y\sigma (ab), YX(a)b, \sigma(X(a) b), X(a)Y(b), \sigma (ab)\big\rangle\\
        &=\big\langle  \sigma (ab),\sigma(aY(b)), aXY(b), X\sigma(ab), XY(a)b, \sigma (Y(a)b), \\
        &
          \sigma(aX(b)), aYX(b), Y\sigma (ab), YX(a)b, \sigma(X(a) b)\big\rangle\\
        &=\big\langle  \sigma (ab),\sigma Y(ab), a[X,Y](b), X\sigma(ab), [X,Y](a)b, \sigma^2(ab) \\
        &
          \sigma X(ab), a\sigma(b), Y\sigma (ab), \sigma(a)b, \sigma^2(ab)\big\rangle
          \\
        &=\big\langle  a[X,Y](b), a\sigma(b), \sigma (ab),  \sigma(a)b, [X,Y](a)b  \big\rangle \\
        &= \big\langle  a[X,Y](b),  \sigma (ab),   [X,Y](a)b  \big\rangle .
    \end{aligned}
    $$
    Here we used the derivation property in obtaining the second, third, fourth and the fifth, equalities, equation \eqref{deriv.comm} to obtain the sixth one and \eqref{sigma} to derive the last equality. We also used freely the cancellation and rearranging rules stemming from the definition of an abelian heap. This proves that $[X,Y]$ is a derivation along $\sigma$.

    The functions $[-,Y]$ and $[X,-]$ are  affine maps by Lemma~\ref{lem.aff.map}.
    It remains to check that $[-,-]$ is a Lie bracket. First, for all $X,Y\in \mathrm{Der}_\sigma(A)$, note that $[X,X] = \sigma =[Y,Y]$, and hence 
    $$
    \big\langle[X,Y],[X,X],[Y,X]\big\rangle = \big\langle XY,YX,\sigma,\sigma,YX,XY\big\rangle = \sigma = [Y,Y].
    $$
    Finally, taking in addition $Z\in \mathrm{Der}_\sigma(A)$,
    $$
    \begin{aligned}
        \big\langle[X,[Y,Z]],&[X,X],[Y,[Z,X]], [Y,Y], [Z,[X,Y]]\big\rangle \\
        &= \big\langle X[Y,Z],[Y,Z]X,\sigma, \sigma, Y[Z,X],[Z,X]Y,\sigma, \sigma, Z[X,Y],[X,Y]Z,\sigma\big\rangle
        \\
        &= \big\langle XYZ, XZY, X\sigma,\sigma X, ZYX, YZX, YZX, YXZ,Y\sigma, \sigma Y,
        \\
        &~~~~XZY, ZXY, ZXY, ZYX,Z \sigma, \sigma Z , YXZ, XYZ,\sigma\big\rangle = \sigma = [Z,Z],
    \end{aligned}
    $$
    where the condition \eqref{deriv.comm} was used. This completes the proof that the set of all derivations on $A$ along $\sigma$ is a Lie affgebra.

    (2) Since $[X,X]=\sigma$ and $\mathrm{Aff}(A)_\sigma$ is closed under the Lie bracket, $\sigma \in \mathrm{Der}_\sigma(A)$. Furthermore, exploring the Jacobi identity for $X, Y=Z \in \mathrm{Aff}(A)_\sigma$ we find,
    $$
    \langle \sigma, YX^2,\sigma X, X\sigma, YX^2 \rangle = \sigma,
    $$
    which implies that $X\sigma = \sigma X$ and hence every element of $\mathrm{Aff}(A)_\sigma$ is a derivation along $\sigma$.
\end{proof}

The following theorem reveals that just as any associative truss can be retracted to an associative ring \cite[Theorem~4.3]{AndBrzRyb:ext}, any Lie affgebra can be retracted to a Lie algebra.

\begin{thm}\label{thm.Lie.alg}
Let $(A,[-,-])$ be a Lie $\mathbb{K}$-affgebra. Then, for all $o\in A$ the $\mathbb{K}$-module $A_o$ together with the bracket
\begin{equation}\label{Lie.bracket}
    [a,b]_{o}= \langle [a,b],[a,o],[o,o],[o,b],o \rangle = [a,b]-[a,o]+[o,o]-[o,b],
\end{equation}
is a Lie $\mathbb{K}$-algebra. Furthermore, any two Lie $\mathbb{K}$-algebras $(A_o,[-,-]_o)$ and $(A_u,[-,-]_u)$ are mutually isomorphic.
\end{thm}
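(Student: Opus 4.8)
The plan is to read formula \eqref{Lie.bracket} as the \emph{bilinearisation} of the bi-affine bracket at the chosen point $o$, to verify in turn the three defining properties of a Lie algebra on $A_o$, and then to produce the isomorphisms through the translation maps of Remark~\ref{rem.trans}. Throughout I would work inside the module $A_o$, so that $\langle x,y,z\rangle=x-y+z$ by \eqref{+-} and $o$ is the zero vector. For $\mathbb{K}$-bilinearity I would first rewrite \eqref{Lie.bracket} as $[a,b]_o=\big([a,b]-[o,b]\big)-\big([a,o]-[o,o]\big)$ and recognise, via the linearisation formula \eqref{linearisation}, the first difference as the linearisation at $o$ of the affine map $a\mapsto[a,b]$ and the second as that of $a\mapsto[a,o]$. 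A difference of two $\mathbb{K}$-linear maps is linear, so $[-,b]_o$ is linear for each $b$; the symmetric argument in the other slot gives linearity in $b$. Antisymmetry, indeed the stronger alternating property $[a,a]_o=o$, then drops out at once: applying \eqref{anti} to the pair $(a,o)$ gives $[a,o]+[o,a]=[a,a]+[o,o]$, and feeding this into $[a,a]_o=[a,a]-[a,o]+[o,o]-[o,a]$ collapses everything to $o$. (Full antisymmetry $[a,b]_o+[b,a]_o=o$ follows the same way, using \eqref{anti} for the three pairs $(a,b)$, $(a,o)$, $(b,o)$.)

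The Jacobi identity is the main obstacle. I would introduce the $\mathbb{K}$-linear maps $\lambda(a):=[a,o]-[o,o]$ and $\rho(b):=[o,b]-[o,o]$ (the linearisations of $[-,o]$ and $[o,-]$) and the constant $k:=[o,o]$, producing the affine decomposition $[a,b]=[a,b]_o+\lambda(a)+\rho(b)+k$. Substituting this into the Jacobi identity written in $A_o$, namely \eqref{Jacobi.o}, and expanding the nested brackets using the bilinearity and linearity just established, the identity \eqref{Jacobi.o} takes the form $J_o+C=o$, where $J_o:=[a,[b,c]_o]_o+[b,[c,a]_o]_o+[c,[a,b]_o]_o$ is the linear Jacobiator and $C$ collects all the surviving terms, each of which carries at least one factor of $\lambda$, $\rho$ or $k$. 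It remains to show $C=o$.

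Because $\mathbb{K}$ is only a commutative ring I cannot separate homogeneous degrees by rescaling the arguments; instead I would kill $C$ by specialising \eqref{Jacobi.o} itself. Setting $a=b=c=o$ yields $3\rho(k)=o$; setting $b=c=o$ yields $[a,k]_o=\rho(a)-\rho^2(a)-\rho\lambda(a)$; and setting $c=o$, after substituting the two previous identities, yields the key derivation-type relation $[a,\lambda(b)]_o+[b,\rho(a)]_o+\rho([a,b]_o)=o$. Using the first two identities the part of $C$ built from $[x,k]_o$, $\rho\lambda$, $\rho^2$, $\rho(k)$ and $\rho$ cancels, while the cyclic sum of the third relation is exactly the cyclic sum of the remaining terms $[a,\lambda(b)]_o$, $[a,\rho(c)]_o$ and $\rho([a,b]_o)$; hence $C=o$ and $J_o=o$, which is the Jacobi identity for $[-,-]_o$. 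I expect the genuinely delicate part to be the bookkeeping in this expansion, namely keeping the $\lambda$-, $\rho$- and $k$-contributions organised and matching their cyclic sums precisely against the three specialised identities.

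For the final assertion I would take the translation isomorphism $\tau_o^u\colon a\mapsto\langle a,o,u\rangle$ of Remark~\ref{rem.trans}, which is already an isomorphism of $\mathbb{K}$-modules $A_o\cong A_u$, and check that it intertwines the brackets, that is $\tau_o^u([a,b]_o)=[\tau_o^u(a),\tau_o^u(b)]_u$. Since $[-,-]$ is a bi-heap homomorphism, expanding $[\langle a,o,u\rangle,\langle b,o,u\rangle]$ together with the terms $[\tau_o^u(a),u]$ and $[u,\tau_o^u(b)]$ that occur inside $[\tau_o^u(a),\tau_o^u(b)]_u$ produces a single heap word in which every occurrence of $[u,-]$, $[-,u]$ and $[u,u]$ cancels in pairs, leaving $\langle[a,b],[a,o],[o,o],[o,b],u\rangle$, which is precisely $\tau_o^u([a,b]_o)$. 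This is a routine, if lengthy, cancellation in the abelian heap requiring no input beyond bi-affinity and the Mal'cev rules, so it presents no real difficulty once the Jacobi identity has been settled.
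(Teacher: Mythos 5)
Your proposal is correct and follows the same overall strategy as the paper: bilinearity of $[-,-]_o$ obtained from the linearisation formula \eqref{linearisation} applied in each slot, antisymmetry read off from \eqref{anti} specialised at pairs involving $o$, the Jacobi identity deduced from the additive form \eqref{Jacobi.o} of \eqref{l.Jacobi}, and the translation map $\tau_o^u$ intertwining the brackets exactly as in the paper's closing display. The only genuine divergence is in how the Jacobi step is organised. The paper expands the cyclic sum $[a,[b,c]_o]_o+\mathrm{cycl.}$ directly into affine brackets and recognises the result as a combination of instances of \eqref{Jacobi.o} for the triples $(a,b,c)$, $(a,b,o)$, $(b,c,o)$, $(c,a,o)$, $(x,o,o)$ and $(o,o,o)$. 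You instead decompose $[a,b]=[a,b]_o+\lambda(a)+\rho(b)+k$ and derive the auxiliary identities $3\rho(k)=o$, $[a,k]_o=\rho(a)-\rho^2(a)-\rho\lambda(a)$ and $[a,\lambda(b)]_o+[b,\rho(a)]_o+\rho([a,b]_o)=o$ by specialising \eqref{Jacobi.o} at $o$; this uses exactly the same instances of the affine Jacobi identity, merely repackaged as statements about the correction maps. I checked the bookkeeping you flagged as delicate: after the first two identities are applied, your correction term $C$ reduces precisely to the cyclic sum of the third, so the plan does close. Your version costs a little more notation but isolates reusable structural facts (notably the derivation-type relation for $\rho$), whereas the paper's version is a single direct cancellation; both are complete proofs.
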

\begin{proof} 
The bracket $[-,-]_o$ is a bi-linearisarion of the bi-affine bracket $[-,-]$ by the repetitive use of the linearisation formula \eqref{linearisation} (step-by-step for each argument), and hence is a bilinear operation on $A_o$.

Written in terms of addition and subtraction in $A_o$ the antisymmetry property \eqref{anti} and Jacobi identity \eqref{l.Jacobi} of the Lie bracket  $[-,-]$, for all  $a,b,c \in A_o$ come out as
$$[a,b]-[a,a]+[b,a]=[b,b],\quad 
[a,[b,c]]-[a,a]+[b,[c,a]]-[b,b]+[c,[a,b]]=[c,c].
$$
The first equality immediately implies that $[a,a]_o=o$ and 
$[a,b]_o = - [b,a]_o.$ 

The second equality is used to check the Jacobi identity for $[-,-]_o$,
\begin{align*}
[a,[b,c]_o ]_o + \mathrm{cycl.}&= [a,[b,c]-[b,o]+[o,o]-[o,c]]_o + \mathrm{cycl.}, \\
&= [a,[b,c]]_o - [a,[b,o]]_o +[a,[o,o]]_o - [a,[o,c]]_o + \mathrm{cycl.}, \\
&=[a,[b,c]] - [o,[b,c]] - [a,[b,o]] + [o,[b,o]] \\
&+ [a,[o,o]] - [o,[o,o]] - [a,[o,c]] + [o,[o,c]] + \mathrm{cycl.} = o.
\end{align*}

Finally, using the translation isomorphism $\tau^u_o$ from Remark~\ref{rem.trans}, we obtain the following equality of terms (binary operations in $A_u$),
    \begin{align*}
        [\tau^u_o(a), \tau^u_o (b) ]_u &= [a-o,b-o]_u, \\
&= [a,b]_u- [a,o]_u-[o,b]_u +[o,o]_u, \\
        &= \langle [a,b],[a,o],[o,o],[o,b],u \rangle = \tau^u_o ([a,b]_o).
    \end{align*}
This proves the isomorphism of  Lie algebras.
\end{proof}

\begin{ex}\label{ex.comm.o}
Let $A$ be an associative $\mathbb{K}$-affgebra with the commutator bracket \eqref{comm} in Proposition~\ref{prop.comm}. Take any $o\in A$. By \cite[Theorem~4.3]{AndBrzRyb:ext} $A_o$ is an associative algebra with the multiplication
$$
a\bullet b = ab - ao +o^2 -ob.
$$
One easily checks that 
$$
[a,b]_o = a\bullet b - b\bullet a,
$$
 the usual commutator Lie bracket on an associative algebra.   
\end{ex}
\begin{ex}\label{ex.action.o}
Let $A$ be a Lie affgebra with the bracket $[a,b]= \zeta\triangleright_ab$, $\zeta\in \mathbb{K}$ (see Proposition~\ref{prop.action}). Take any $o\in A$. Then, irrespective of the choice of $\zeta$,
$$
[a,b]_o = o,
$$
the trivial Lie bracket on $A_o$. As shown in Proposition~\ref{prop.action} in case $\mathbb{K}$ a field the Lie brackets on $A$ depend heavily on the choice of $\zeta$. Thus non-isomorphic Lie affgebras can reduce to isomorphic (or even identical) Lie algebras.

In a similar way all Lie affgebra structures $[a,b]=\sigma(a)$ discussed in Example~\ref{ex.trivial} collapse to the zero Lie bracket on $A_o$.  
\end{ex}

\section{Nijenhuis operators on Lie affgebras}\label{sec.Nij}
We now look to how Nijenhuis operators behave on Lie affgebras, with adapted definitions from \cite{KosSchMag:Poi}. In particular, we show that Nijenhuis operators give rise to a family of compatible Lie brackets on an affine space.
\begin{defn}\label{def.nijen.op}
Let $(A,[-,-])$ be a Lie affgebra. An affine homomorphism $N: A \rightarrow A$ is called a \textbf{Nijenhuis operator} if, 
for all $a,b \in A$,
\begin{equation}\label{Nij.cond}
[N(a),N(b)] = N \bigl( \langle [N(a),b] , N([a,b]) , [a,N(b)]  \rangle \bigr).
\end{equation}
The binary operation $[-,-]_N$ on $A$, given by, 
\begin{equation}\label{Nij.bra}
 [a,b]_N = \langle [N(a),b] , N([a,b]) , [a,N(b)]  \rangle,   
\end{equation}
is called the \textbf{Nijenhuis bracket}.
\end{defn}

Note that, since $N$ is an affine homomorphism, the Nijenhuis bracket $[-,-]_N$ is affine on each argument by Lemma~\ref{lem.aff.map} and hence a bi-affine map.
\begin{rem}\label{rem.Nij}
To make the following calculations more legible, from this point onward we will be using the operator notation convention $N(a) = Na $. We will also use the additive notation for ternary heap operation \eqref{+-}. In this notation the Nijenhuis bracket \eqref{Nij.bra}  and the Nijenhuis operator condition \eqref{Nij.cond} come out as
$$
[a,b]_N =  [Na,b] - N[a,b]+ [a,Nb], \qquad [Na,Nb] = N[a,b]_N.
$$
\end{rem}

\begin{thm}\label{nij.is.lie.bra}
A Nijenhuis bracket on a Lie affgebra  $(A,[-,-])$ is a Lie bracket. 
\begin{proof}
First, we see if the Nijenhuis bracket satisfies the antisymmetry property. We start with
\begin{align*}
\langle [b,b]_N, [b,a]_N,[a,a]_N\rangle &= [Nb,b]- N[b,b] +[b,Nb] \\
&- [Nb,a] + N[b,a] - [b,Na] \\
&+ [Na,a] - N[a,a] + [a,Na],
\end{align*}
and then use the antisymmetry property \eqref{anti} of $[-,-]$,
\begin{align*}
\langle [b,b]_N, &[b,a]_N,[a,a]_N\rangle = [Nb,b]- N[b,b] +[b,Nb] 
- [a,a] + [a,Nb] - [Nb,Nb]+N[a,a] \\
&-N[a,b]+N[b,b] 
- [Na,Na] + [Na,b] - [b,b] 
+ [Na,a] - N[a,a] + [a,Na].
\end{align*}
Next we perform cancellations and apply again the antisymmetry of $[-,-]$,
$$ \langle [b,b]_N, [b,a]_N,[a,a]_N\rangle = [Na,b]- N[a,b] +[a,Nb] = [a,b]_N, $$
thus the antisymmetry property holds for $[-,-]_N$. 

It remains to  check that the Nijenhuis bracket satisfies the Jacobi identity. To this end we start with developing the left-hand side of the Jacobi identity 
$$ 
\begin{aligned} 
\mathrm{LHS} &= \langle [a,[b,c]_N]_N,  [a,a]_N, [b,[c,a]_N]_N, [b,b]_N, [c,[a,b]_N]_N \rangle  \\
&= [Na,[Nb,c]] - [Na,N[b,c]] 
+ [Na,[b,Nc]] -N[a,[Nb,c]] + N[a,N[b,c]] \\
&- N[a,[b,Nc]] + [a,[Nb,Nc]]
-[a,a] + N[a,a] - [Na,Na] 
+[Nb,[Nc,a]]\\
&- [Nb,N[c,a]] + [Nb,[c,Na]] 
-N[b,[Nc,a]] + N[b,N[c,a]] 
- N[b,[c,Na]]\\
&+ [b,[Nc,Na]]-[b,b]  + N[b,b]
- [Nb,Nb] 
+[Nb,[Nc,a]] - [Nb,N[c,a]]\\
&+ [Nb,[c,Na]] -N[b,[Nc,a]]
+ N[b,N[c,a]] 
- N[b,[c,Na]] + [b,[Nc,Na]],
\end{aligned}
$$
and then apply the Jacobi identity for $[-,-]$ several times to obtain
\begin{align*}
\mathrm{LHS}=&[c,c]+[Na,Na]+[Nc,Nc]+[Nb,Nb]+[Nc,Nc]  
+ N[Na,[b,c]] - N[Na,Na]\\
&- N[b,b] - N[c,c] 
+ N[Nb,[c,a]] 
-N[a,a] - N[Nb,Nb] - N[c,c]\\
&+ N[Nc,[a,b]] 
- N[a,a] - N[b,b] - N[Nc,Nc] 
-[Na,N[b,c]]+N[a,N[b,c]]\\
&+N[a,a] 
-[Nb,N[c,a]]+N[b,N[c,a]]+N[b,b] 
-[Nc,N[a,b]]+N[c,N[a,b]].
\end{align*}
In the next step we  perform cancellations and substitutions of the form
$$N^2 [x,y]= N[Nx,y] - [Nx,Ny]+N[x,Ny]. $$
This results in
\begin{align*}
\mathrm{LHS}&= [c,c]+[Na,Na]+[Nc,Nc]+[Nb,Nb]+[Nc,Nc] 
-N[Na,Na]\\
&-N[a,a]-N[Nb,Nb]-N[b,b]-N[Nc,Nc]-N[c,c]-N[c,c] \\
&+N^2 [a,[b,c]] + N^2[b,[c,a]] + N^2 [c,[a,b]].
\end{align*}
The application of  the Jacobi identity for $[-,-]$ yields,
\begin{align*}
\mathrm{LHS}&= [c,c]+[Na,Na]+[Nc,Nc]+[Nb,Nb]+[Nc,Nc] 
-N[Na,Na]-N[a,a]\\
&-N[Nb,Nb]-N[b,b]-N[Nc,Nc]-N[c,c]-N[c,c] \\
&+N^2[a,a]+N^2[b,b]+N^2[c,c].
\end{align*}
It remains to use the definition of the  Nijenhuis bracket \eqref{Nij.bra}, antisymmetry and the Nijenhuis operator \eqref{Nij.cond} condition,  to coclude that 
$$
\mathrm{LHS}= [c,c]-N[c,c]+[Nc,Nc] =[Nc,c] - N[c,c]+ [c,Nc]  = [c,c]_N. 
$$
i.e.
$$ \langle [a,[b,c]_N]_N, [a,a]_N, [b,[c,a]_N]_N, [b,b]_N, [c,[a,b]_N]_N \rangle = [c,c]_N,
$$
as required.
\end{proof}
\end{thm}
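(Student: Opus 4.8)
The plan is to verify the two defining conditions of a Lie bracket from Definition~\ref{def.Lie} for $[-,-]_N$, since bi-affinity has already been recorded in the remark following Definition~\ref{def.nijen.op}. Throughout I would adopt the additive notation of Remark~\ref{rem.Nij}: fixing an arbitrary base $o\in A$ and writing $\langle x,y,z\rangle = x-y+z$ in the abelian group $A_o$. This is legitimate because every identity to be checked is a heap identity whose overall value is independent of $o$, and because both $[-,-]$ (in each slot) and $N$ are heap homomorphisms, so they distribute over $x-y+z$. The entire argument then reduces to a finite, if lengthy, cancellation in $A_o$, driven by three inputs: the antisymmetry \eqref{anti} and the Jacobi identity \eqref{l.Jacobi} of $[-,-]$ written additively, and the Nijenhuis condition \eqref{Nij.cond} in the form $N[x,y]_N = [Nx,Ny]$.

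For antisymmetry I would expand $\langle [b,b]_N,[b,a]_N,[a,a]_N\rangle$ (the form of \eqref{anti} permitted by Remark~\ref{rem.Lie}), replacing each $[x,y]_N$ by its three constituents $[Nx,y]-N[x,y]+[x,Ny]$. Applying the additive antisymmetry of $[-,-]$ at the pairs $(a,b)$ and $(Na,Nb)$, together with the fact that $N$ sends each self-bracket term $[x,x]$ to $N[x,x]$, collapses the nine resulting terms down to $[Na,b]-N[a,b]+[a,Nb]=[a,b]_N$. This step is purely mechanical and presents no obstacle.

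The substance of the theorem is the Jacobi identity. My plan is first to expand a single nested bracket $[a,[b,c]_N]_N$ completely: expand the inner $[b,c]_N$, then apply $[a,-]_N=[Na,-]-N[a,-]+[a,N-]$. The decisive simplification is that the third piece contains $N([b,c]_N)$, which the Nijenhuis condition rewrites instantly as $[Nb,Nc]$; this turns what would be nine terms into a clean combination of seven, of the shapes $[Na,[Nb,c]]$, $N[a,[Nb,c]]$ and $[a,[Nb,Nc]]$. I would then assemble the three cyclic copies and subtract the diagonal contributions $[a,a]_N,[b,b]_N,[c,c]_N$. The reduction proceeds in layers: apply the Jacobi identity of $[-,-]$ to the cyclic sums carrying a single $N$ in one argument; then use the substitution $N^2[x,y]=N[Nx,y]-[Nx,Ny]+N[x,Ny]$, obtained by applying $N$ to the Nijenhuis condition, to trade mixed $N[N-,-]$ terms for $N^2[-,-]$ terms; and finally apply the Jacobi identity of $[-,-]$ once more, now at the level of $N^2$, to collapse the surviving $N^2$-Jacobiator $N^2([a,[b,c]]+[b,[c,a]]+[c,[a,b]])$ into diagonal terms. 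What remains are only self-bracket terms of the forms $[x,x]$, $[Nx,Nx]$, $N[x,x]$, $N[Nx,Nx]$ and $N^2[x,x]$, which reassemble via antisymmetry and the Nijenhuis condition into $[Nc,c]-N[c,c]+[c,Nc]=[c,c]_N$.

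The main obstacle is precisely this last, multi-layered reduction: there is no conceptual leap, but one must track on the order of two dozen terms and invoke the Jacobi identity of $[-,-]$ and the $N^2$-substitution in exactly the right order for the cancellations to align. The organizing principle I would keep in view is that every non-diagonal term is destined to be absorbed into an application of Jacobi for $[-,-]$ — possibly only after being hit by $N$ or $N^2$ — so that once all such applications are exhausted only diagonal terms survive, and these are then forced to rebuild $[c,c]_N$.
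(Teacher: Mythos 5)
Your proposal is correct and follows essentially the same route as the paper: additive (heap) notation, term-by-term expansion of the Nijenhuis brackets, the early use of the Nijenhuis condition to collapse $[a,N[b,c]_N]$ to $[a,[Nb,Nc]]$, the substitution $N^2[x,y]=N[Nx,y]-[Nx,Ny]+N[x,Ny]$, and repeated applications of the Jacobi identity of $[-,-]$ until only diagonal terms survive and reassemble into $[c,c]_N$. No discrepancies worth noting.
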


\begin{ex}\label{ex.Nij.com}
   Let $A$ be an associative $\mathbb{K}$-affgebra $A$, and  $N:A \to A$ an affine map such that, for all $a,b\in A$,
   $$
   N(a)N(b) = N\langle N(a)b, N(ab), aN(b)\rangle;
   $$
   see \cite{BrzPap:aff}. A straightforward computation
   \begin{align*}
        N \langle [N(a),b],N[a,b],[a,N(b)] \rangle &= N(N(a)b-bN(a)+b-N(ab)+N(ba)-N(b) \\
        &+aN(b)-N(b)a+N(b)), \\ 
        &=N(a)N(b)-N(b)N(a)+N(b)=[N(a),N(b)], 
    \end{align*}
    affirms that 
  $N$ is a Nijenhuis operator on the Lie affgebra $A$ with the bracket given in Proposition~\ref{prop.comm}.
\end{ex}

\begin{ex}\label{ex.Nij.act}
    Let $A$ be the $\mathbb{K}$-affine module with the Lie bracket $[a,b] = \zeta\triangleright_ab$ as in Proposition~\ref{prop.action}. For any affine endomorphism $N:A\longrightarrow A$, using the homomorphism property of $N$ and the base change propoerty (c) in Definition~\ref{def.aff.mod} we find
    $$
    \begin{aligned}
        {}[a,b]_N &= \langle \zeta\triangleright_{Na} b, N(\zeta\triangleright_ab), \zeta\triangleright_aNb\rangle\\
        &= \langle \zeta\triangleright_{a} b, \zeta\triangleright_{a} Na, Na, \zeta\triangleright_{Na} Nb,  \zeta\triangleright_aNb\rangle\\
        &= \langle \zeta\triangleright_{a} b, \zeta\triangleright_{a} Na, Na, Na, \zeta\triangleright_{a} Na,  \zeta\triangleright_aNb,  \zeta\triangleright_aNb\rangle\\
        &= \zeta\triangleright_{a} b = [a,b].
    \end{aligned}
    $$
    Again, by using the homomorphism property of $N$ we immediately find that 
    $$
    [Na,Nb]=N[a,b] = N[a,b]_N.
    $$ Therefore, any affine endomorphism $N$ of $A$ is a Nijenhuis operator for $(A,[-,-])$ and the Nijenhuis Lie bracket coincides with the original Lie bracket on $A$.
\end{ex}

\begin{thm}\label{thm.Nij.iter}
 Let $N$ be a Nijenhuis operator on a Lie $\mathbb{K}$-affgebra $(A,[-,-])$. For any $k\in \mathbb{N}$ we denote by $N^k$ the $k$-th composition power of $N$, with the convention that $N^0=\id$. For all $k,l\in \mathbb{N}$:
 \begin{zlist}
         \item $N^k$ is a Nijenhuis operator on $(A,[-,-])$.
         \item
                  $N^l$ is a Nijenhuis operator on $(A,[-,-]_{N^k})$.
                  \item For all $\alpha \in  \mathbb{K}$, the map
         $$
         N_\alpha^{k,l}:=\alpha\triangleright_{N^k}N^l:  A\longrightarrow A, \qquad a \longmapsto \alpha\triangleright_{N^ka}{N^la},
         $$
     is a Nijenhuis operator on $(A, [-,-])$.
 \end{zlist}
\begin{proof} Using similar methodology to that of Theorem 4.9 in \cite{BrzPap:aff} we first show that, for all $a,b \in A, k,l \in \mathbb{N}$, 
\begin{align}
[N^k a, Nb]&= \langle N[N^ka,b],N^{k+1}[a,b],N^k[a,Nb] \rangle, \label{induct:4.1} \\
[Na, N^k b] &= \langle N[a,N^k b],N^{k+1}[a,b],N^k[Na,b] \rangle, \label{induct:4.2} \\
N^{k+1}[a,b] &= \langle N[N^k a,b],[N^{k}a,Nb],N^k[a,Nb] \rangle, \label{induct:4.3} \\
&= \langle N^k [Na,b],[Na,N^k b],N[a,N^k b] \rangle. \label{induct:4.4}
\end{align}
We will show how (\ref{induct:4.1}) can be proven by induction and only note that (\ref{induct:4.2}) can be proven symmetrically, and that proofs of (\ref{induct:4.3}) and  (\ref{induct:4.4}) follow trivially.
Starting with (\ref{induct:4.1}), for $k=0$ we have $N^0 = \id$, and for $k=1$ we have the Nijenhuis operator. If we assume that equation (\ref{induct:4.1}) holds for $k$, then the Nijenhuis condition yields,
\begin{align}
    [N^{k+1}a,Nb] &= [N(N^k a), N b] = N \langle [N^{k+1} a, b], N[N^k a,b],[N^k a, Nb] \rangle,  \\
    &= N \langle [N^{k+1} a, b], N[N^k a,b], N[N^ka,b],N^{k+1}[a,b],N^k[a,Nb]\rangle, \\
    &= \langle N[N^{k+1}a,b],N^{k+2}[a,b],N^{k+1}[a,Nb] \rangle. \label{equat:k+1}
\end{align}
Thus, (\ref{induct:4.1}) holds for all $k \in \mathbb{N}$, by induction. Then using the above, we compute,
\begin{align*}
    [Na,Nb]_{N^k} = \langle &[N^{k+1}a,Nb],N^k [Na,Nb],[Na,N^{k+1}b] \rangle, \\
    = \langle &N[N^{k+1}a,b],N^{k+2}[a,b],N^{k+1}[a,Nb], \\
    &N^{k+1}[Na,b] , N^{k+2} [a,b] , N^{k+1}[a,Nb],\\
    &N^{k+1}[Na,b],N^{k+2}[a,b],N[a,N^{k+1}b] \rangle, \\
    = \langle &N[N^{k+1}a,b], N^{k+2}[a,b],N[a,N^{k+1}b] \rangle = N[a,b]_{N^{k+1}}.
\end{align*}
Now we will show that,
\begin{equation}
    N^l [a,b]_{N^{k+l}} = [N^l a, N^l b]_{N^{k}}. \label{equal:4.8}
\end{equation}
This is proven through a straightforward induction, and we need only demonstrate the case where $l=1$. Using (\ref{induct:4.3}) and (\ref{induct:4.4}), we find,
\begin{align*}
    \langle [Na,b]_{N^k}, & N[a,b]_{N^k}, [a,Nb]_{N^k} \rangle = \langle [N^{k+1} a,b], N^k [Na,b], [Na,N^k b], \\
    &N[N^k a, b], N^{k+1} [a,b], N[a,N^k b], 
    [N^k a,Nb], N^k [a,Nb], [a,N^{k+1} b] \rangle, \\
    = & \langle [N^{k+1} a , b], N^{k+1} [a,b], N^{k+1} [a,b],N^{k+1} [a,b], [a,N^{k+1} b] \rangle\\
    = & \langle [N^{k+1} a , b], N^{k+1} [a,b], [a,N^{k+1} b] \rangle =[a,b]_{N^{k+1}}.
\end{align*}
Lastly, we must demonstrate,
\begin{equation}
    [N^{k,l}_\alpha a, N^{k,l}_\alpha b] = N^{k,l}_\alpha \langle [N^{k,l}_\alpha a,b],N^{k,l}_\alpha [a,b],[a, N^{k,l}_\alpha b]\rangle. \label{equal:4.9}
\end{equation}
To make the following of the argument easier, we use the additive notation explained in Remark~\ref{rem.Nij}, write the affine action relative to the element determining the binary operations as in Remark~\ref{rem.act.+-},   and we compute,
\begin{align*}
[N^{k,l}_\alpha a, N^{k,l}_\alpha b] &= \alpha^2[ N^l a,N^l b] - \alpha^2 [ N^l a, N^k b] + \alpha [N^l a , N^k b] \\
&-\alpha^2 [ N^k a, N^l b] + \alpha^2 [ N^k a, N^k b] - \alpha[ N^k a , N^k b] \\
&+\alpha[N^k a, N^l b] - \alpha[N^k a, N^k b] + [N^k a , N^k b].
\end{align*}
Now compute, applying the Nijenhuis condition, 
\begin{align*}
    N^{k,l}_\alpha & \langle [N^{k,l}_\alpha a,b],N^{k,l}_\alpha [a,b],[a, N^{k,l}_\alpha b]\rangle = \alpha^2 N^l [ N^l a,b] - \alpha^2 N^l [N^k a,b] + \alpha N^l [N^k a,b] \\
    &-\alpha^2 N^k [ N^l a,b] + \alpha^2 N^k [N^k a,b] - \alpha N^k [N^k a,b] 
    +\alpha N^k [ N^l a,b] - \alpha N^k [N^k a,b]  \\
    &+ N^k [N^k a,b]- \alpha^2 N^l N^l [a,b] + \alpha^2 N^l N^k [a,b] - \alpha N^l N^k [a,b] + \alpha^2 N^k N^l [a,b]  \\
    & - \alpha^2 N^k N^k[a,b] + \alpha N^k N^k [a,b] 
    -\alpha N^k N^l [a,b] + \alpha N^k N^k [a,b] - N^k N^k [a,b] \\
    &+ \alpha^2 N^l [a, N^l b]- \alpha^2 N^l [a, N^k b]+ \alpha N^l [a , N^k b] - \alpha^2 N^k [a, N^l b]+ \alpha^2 N^k [a, N^k b]\\
    &- \alpha N^k [a , N^k b] 
    + \alpha N^k [a, N^l b]- \alpha N^k [a, N^k b]+ N^k [a , N^k b], \\ 
    &=- \alpha^2 N^l [N^k a,b] + \alpha^2 N^l N^k [a,b] - \alpha^2 N^l [a, N^k b] + \alpha N^l [N^k a,b] - \alpha N^l N^k [a,b] \\
    & + \alpha N^l [a , N^k b]
    -\alpha^2 N^k [ N^l a,b]+ \alpha^2 N^k N^l [a,b]- \alpha^2 N^k [a, N^l b] +\alpha N^k [ N^l a,b]  \\
    & -\alpha N^k N^l [a,b] + \alpha N^k [a, N^l b]
     +\alpha^2 [N^l a , N^l b] + \alpha^2 [N^k a,N^k b]+ [N^k a, N^k b] \\
    &- 2 \alpha [N^k a, N^k b].
\end{align*}
To see how the terms of our two equations compare, we require some further calculation, for the case where $ k \leq l$ (or similarly where $k \geq l$ ) we use equation (\ref{equal:4.8}), the definition of the Nijenhuis bracket and part (1) of this theorem to see, 
\begin{align*}
    N^k [a,b]_{N^l} &= [N^k a, N^k b]_{N^{l-k}} 
    = [N^k a, N^l b] - N^{l-k}[N^k a, N^k b] + [N^l a, N^k b] \\
    &= [N^k a, N^l b] - N^l [N^k a,b] + N^{l+k} [a,b] - N^l [a,N^k b] + [N^l a, N^k b] \\
    &= [N^k a , N^l b] - N^l [a,b]_k + [N^l a, N^k b],
\end{align*}
thus, we can use the equality, 
\begin{align*}
[N^k a, N^l b]+[N^l a,N^k b] &= N^k ( [N^l a,b] - N^l[a,b]+ [a,N^l b] ) \\
&+ N^l ( [N^k a,b]-N^k[a,b]+[a,N^k b] ),
\end{align*}
to see that we have satisfied the equality (\ref{equal:4.9}).
\end{proof}
\end{thm}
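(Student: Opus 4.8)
The plan is to reduce all three assertions to a small family of exchange identities between the powers $N^k$ and the nested Nijenhuis brackets, to establish those identities once and for all by induction on the exponent, and then to obtain parts~(1) and (2) as formal consequences, leaving only part~(3) as a genuine computation.

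First I would prove, by induction on $k$ and using the defining Nijenhuis condition \eqref{Nij.cond} rewritten additively as in Remark~\ref{rem.Nij}, the exchange identity
$$
[N^k a, Nb] = N[N^ka,b] - N^{k+1}[a,b] + N^k[a,Nb],
$$
together with its mirror image obtained by swapping the roles of the two arguments. The base cases $k=0,1$ are immediate, since $N^0=\id$ and $k=1$ is the very definition of a Nijenhuis operator; the inductive step applies \eqref{Nij.cond} to $N(N^k a)$ and then invokes the inductive hypothesis. Rearranging each identity so as to isolate $N^{k+1}[a,b]$ yields two further forms in which the higher power sits on the outside. The only care needed here is to keep the parities of the heap terms aligned, after which the reshuffling and cancellation rules for the abelian heap make the bookkeeping routine.

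From these I would extract two consequences. The \emph{bracket-composition} law $\big([-,-]_{N^k}\big)_{N^l}=[-,-]_{N^{k+l}}$ follows by induction on $l$, the case $l=1$ being a direct substitution of the exchange identities into the definition \eqref{Nij.bra}. The \emph{scaling} law $N^l[a,b]_{N^{k+l}}=[N^l a,N^l b]_{N^k}$ follows likewise by induction on $l$, with $l=1$ computed from the exchange identities. Part~(1) is then the case $k=0$ of the scaling law, since $[-,-]_{N^0}=[-,-]$, giving exactly $N^l[a,b]_{N^l}=[N^l a,N^l b]$. Part~(2) follows by combining the two laws: the Nijenhuis condition for $N^l$ relative to $[-,-]_{N^k}$ reads $[N^l a,N^l b]_{N^k}=N^l\big([a,b]_{N^k}\big)_{N^l}$, whose right-hand side is $N^l[a,b]_{N^{k+l}}$ by composition and whose left-hand side equals the same by scaling. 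Throughout I take for granted that every $[-,-]_{N^k}$ is a Lie affgebra bracket, which is Theorem~\ref{nij.is.lie.bra}.

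The hard part will be part~(3), the mixed operator $N_\alpha^{k,l}=\alpha\triangleright_{N^k}N^l$, because here the affine action genuinely enters and the verification cannot be reduced to a pure heap manipulation. My approach would be to fix a base point, pass to additive notation via Remark~\ref{rem.act.+-} so that $N_\alpha^{k,l}a=\alpha N^l a-\alpha N^k a+N^k a$, and expand both sides of the required Nijenhuis condition as $\mathbb{K}$-linear combinations of the terms $N^i[N^j a,b]$, $N^i[a,N^j b]$ and $N^iN^j[a,b]$ with $i,j\in\{k,l\}$. Applying the Nijenhuis condition for $N^k$ and $N^l$ (from part~(1)) to the right-hand side, the match of the two sides collapses to the single symmetrised identity
$$
[N^k a, N^l b] + [N^l a, N^k b] = N^k\!\left([N^l a,b]-N^l[a,b]+[a,N^l b]\right) + N^l\!\left([N^k a,b]-N^k[a,b]+[a,N^k b]\right),
$$
itself deduced from the scaling law and the definition of the Nijenhuis bracket (the cases $k\le l$ and $k\ge l$ being symmetric). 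I expect the real difficulty to be organisational rather than conceptual: controlling the roughly two dozen $\alpha^2$-, $\alpha$- and constant-weight terms on each side and recognising that, after substitution, they cancel pairwise down to this symmetrised identity, with no structural input needed beyond parts~(1)--(2) and the exchange identities.
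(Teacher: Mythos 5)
Your proposal is correct and follows essentially the same route as the paper: the inductive exchange identities $[N^k a,Nb]=\langle N[N^ka,b],N^{k+1}[a,b],N^k[a,Nb]\rangle$ and their variants, the scaling law $N^l[a,b]_{N^{k+l}}=[N^la,N^lb]_{N^k}$ together with the bracket-composition law $([-,-]_{N^k})_{N^l}=[-,-]_{N^{k+l}}$ for parts (1)--(2), and for part (3) the additive expansion reducing to the symmetrised identity $[N^ka,N^lb]+[N^la,N^kb]=N^k([N^la,b]-N^l[a,b]+[a,N^lb])+N^l([N^ka,b]-N^k[a,b]+[a,N^kb])$. Your packaging of parts (1) and (2) as formal corollaries of the two named laws is a slightly cleaner presentation of the same argument.
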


\begin{cor}\label{cor.comp}
 Let $N$ be a Nijenhuis operator on a  Lie $\mathbb{K}$-affgebra $(A,[-,-])$. Then, for all $k,l\in \mathbb{N}$ the  Lie brackets   $[-,-]_{N^k}$ and $[-,-]_{N^l}$ are compatible in the sense that for all $\alpha \in \mathbb{K}$, the map
 \begin{equation}\label{bra.com}
    A\times A\to A, \qquad (a,b)\mapsto \alpha\triangleright_{[a,b]_{N^k}}[a,b]_{N^l},
 \end{equation}
 is a Lie bracket on $A$.
\end{cor}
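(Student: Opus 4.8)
The plan is to deduce Corollary~\ref{cor.comp} from the three parts of Theorem~\ref{thm.Nij.iter} rather than to compute anything afresh. The key observation is that the map in \eqref{bra.com} is, pointwise, exactly the Nijenhuis bracket built from the operator $N_\alpha^{k,l} = \alpha\triangleright_{N^k}N^l$ applied to the original bracket $[-,-]$. Indeed, by Definition~\ref{def.nijen.op} the Nijenhuis bracket of $[-,-]$ along any affine endomorphism $M$ is $[a,b]_M = \langle [Ma,b], M[a,b], [a,Mb]\rangle$; specialising $M = N_\alpha^{k,l}$ and recalling that $N_\alpha^{k,l}a = \alpha\triangleright_{N^ka}N^la$, this is precisely the map $(a,b)\mapsto \alpha\triangleright_{[a,b]_{N^k}}[a,b]_{N^l}$ once one checks that the Nijenhuis-bracket expression collapses to that affine combination.

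First I would record the identification $[a,b]_{N_\alpha^{k,l}} = \alpha\triangleright_{[a,b]_{N^k}}[a,b]_{N^l}$ explicitly. The cleanest route is to pass to additive notation as in Remark~\ref{rem.Nij} and fix a base $o$ as in Remark~\ref{rem.act.+-}, so that the right-hand side of \eqref{bra.com} reads $\alpha[a,b]_{N^l} - \alpha[a,b]_{N^k} + [a,b]_{N^k}$. Expanding $[a,b]_{N_\alpha^{k,l}} = [N_\alpha^{k,l}a, b] - N_\alpha^{k,l}[a,b] + [a, N_\alpha^{k,l}b]$ and using bi-affinity of $[-,-]$ together with the affine definition of $N_\alpha^{k,l}$ (again in additive form, $N_\alpha^{k,l}x = \alpha N^lx - \alpha N^kx + N^kx$), one expands each of the three terms and collects; the cross terms reorganise precisely into $\alpha([N^la,b]-N^l[a,b]+[a,N^lb]) - \alpha([N^ka,b]-N^k[a,b]+[a,N^kb]) + ([N^ka,b]-N^k[a,b]+[a,N^kb])$, i.e.\ $\alpha[a,b]_{N^l} - \alpha[a,b]_{N^k} + [a,b]_{N^k}$, as wanted.

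Once this identification is in hand, the rest is immediate. By Theorem~\ref{thm.Nij.iter}(3) the map $N_\alpha^{k,l}$ is a Nijenhuis operator on $(A,[-,-])$, and by Theorem~\ref{nij.is.lie.bra} the Nijenhuis bracket of any Nijenhuis operator on a Lie affgebra is again a Lie bracket. Therefore $[-,-]_{N_\alpha^{k,l}}$ is a Lie bracket on $A$; since it coincides with \eqref{bra.com}, the map $(a,b)\mapsto \alpha\triangleright_{[a,b]_{N^k}}[a,b]_{N^l}$ is a Lie bracket for every $\alpha\in\mathbb{K}$, which is exactly the asserted compatibility.

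I expect the only real obstacle to be the bookkeeping in the first expansion, namely verifying $[a,b]_{N_\alpha^{k,l}} = \alpha\triangleright_{[a,b]_{N^k}}[a,b]_{N^l}$. This is a routine but slightly delicate heap manipulation because it mixes the affine action $\alpha\triangleright$ (whose base must be tracked) with the iterated Nijenhuis brackets; the safe way is to work throughout relative to a fixed $o$ in additive notation, confirm the independence of $o$ afterwards via Remark~\ref{rem.act.+-}, and lean on the bi-affinity of $[-,-]$ and the heap cancellation rules to see the unwanted monomials drop out. No new structural idea is needed beyond the two cited theorems.
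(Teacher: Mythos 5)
Your proposal is correct and follows exactly the paper's own route: the paper likewise identifies the map \eqref{bra.com} with the Nijenhuis bracket $[-,-]_{N_\alpha^{k,l}}$ (leaving the verification as "one easily checks") and then invokes Theorem~\ref{thm.Nij.iter}(3) together with Theorem~\ref{nij.is.lie.bra}. Your explicit expansion of $[a,b]_{N_\alpha^{k,l}}$ in additive notation relative to a fixed base is a valid way to carry out that check, so the argument is complete.
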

\begin{proof}
    One easily checks that the operation \eqref{bra.com} is the Nijenhuis bracket corresponding to $N_\alpha^{k,l}$ in Theorem~\eqref{thm.Nij.iter}, and hence it is a Lie bracket by the statement (3) of Theorem~\eqref{thm.Nij.iter} and by Theorem~\ref{nij.is.lie.bra}.
\end{proof}

\begin{ex}\label{ex.comp}
Let $A$ be an affine space and $\sigma \in \mathrm{Aff}(A)$. Take the Lie bracket $[a,b]=\sigma(a)$ of Example~\ref{ex.trivial}. An affine map $N:A\to A$ is a Nijenhuis operator for this bracket provided $N\circ\sigma = \sigma\circ N$. The corresponding Nijenhuis bracket is $[a,b]_N=\sigma(Na)$. By Corollary~\ref{cor.comp}, we have a family of pairwise compatible Lie structures that inculdes the original bracket, $[a,b]_{N^k}=\sigma(N^ka)$. Of course this does not an exhaust the list of compatible structures. Let us take any $\xi \in \mathrm{Aff}(A)$ and consider $[a,b]_\xi = \xi(a)$. Then, for all $a,b\in A$ and $\alpha\in \mathbb{K}$,
$$
\alpha\triangleright_{[a,b]}[a,b]_\xi = \alpha\triangleright_{\sigma(a)}\xi(a) = (\alpha\triangleright_{\sigma}\xi)(a),
$$
which is the Lie bracket of the type Example~\ref{ex.trivial} with $\sigma$ replaced by the affine homomorphism $\alpha\triangleright_{\sigma}\xi$.
\end{ex}

Finally, we prove that Nijenhuis operators on Lie affgebras retract Nijenhuis operators on Lie algebras, that to the linearisation of  a Nijenhuis operator on a Lie affgebra is a Nijenhuis operator on the corresponding Lie algebra.
\begin{thm}\label{thm.nij.final}
Let $(A,[-,-])$ be a Lie affgebra and $N$ a Nijenhuis operator. If we have an element $o \in A$ which acts as the neutral element of addition, then we have the Lie algebra $(A_o, [-,-]_o)$ as seen in Theorem~\ref{thm.Lie.alg}. Let $N_o : A_o \rightarrow A_o $ be defined by, $$ N_o a = \langle Na, No, o \rangle  = Na - No,$$
then $N_o$ is a Nijenhuis operator on the Lie algebra $(A_o, [-,-]_o)$.
\begin{proof}
The function $N_o$ is a linearisation of the affine map $N$ (see \eqref{linearisation}) and hence a linear operator on the vector space $A_o$. 
We must demonstrate that the following equality holds,
\begin{equation}
[N_o a, N_o b]_o = N_o \left([N_o a,b]_o - N_o [a,b]_o + [a,N_o b]_o \right). \label{nij.con.4.7.}
\end{equation}
Compute the left hand side,
    \begin{align*}
    [N_o a, N_o b]_o &= [N_o a , N_o b]-[N_o a, o ]+[o,o]-[o, N_o b] \\ 
    &=[Na,Nb]-[Na,No]+[Na,o]-[No,Nb]+[No,No]-[No,o]\\
    &+[o,Nb]-[o,No]+[o,o] - [Na,o] + [No,o] - [o,o] + [o,o] \\
    &-[o,Nb]+[o,No]-[o,o] \\
    &= [Na,Nb]-[Na,No] +[No,No] -[No,Nb]. 
\end{align*}
Then compute the right hand side,
$$
\begin{aligned}
N_o &\left( [N_o a,b]_o - N_o [a,b]_o + [a,N_o b]_o \right)\\
&= N_o ( [Na,b]_o -[No,b]_o+[o,b]_o -N[a,b]_o + No +[a,Nb]_o - [a,No]_o + [a,o]_o )\\
    &= N ( [Na,b] - [Na,o] + [o,o] - [o,b] - [No,b] + [No,o] - [o,o] + [o,b] \\
    &+[o,b] - [o,o] + [o,o] - [o,b] -N[a,b] + N[a,o] - N[o,o] + N[o,b] \\
    &+[a,Nb]-[a,o]+[o,o]-[o,Nb]-[a,No]+[a,o]-[o,o]+[o,No] \\
    &+[a,o]-[a,o]+[o,o]-[o,o]) -No\\ 
    &= N([Na,b] -N[a,b] +N[a,Nb]) - N([Na,o] -N[a,o] +N[a,No])\\
    &+ N([No,o] -N[o,o] +N[o,No]) - N([No,b] -N[o,b] +N[o,Nb])
      \end{aligned}
    $$
We may then use the Nijenhuis condition when comparing terms to see that both sides of equation (\ref{nij.con.4.7.}) are equal.
\end{proof}
\end{thm}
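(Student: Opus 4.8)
The plan is to verify directly the classical (linear) Nijenhuis condition
$$
[N_o a, N_o b]_o = N_o\big([N_o a, b]_o - N_o[a,b]_o + [a, N_o b]_o\big), \qquad a,b\in A_o,
$$
by expanding both sides in terms of the \emph{affine} bracket $[-,-]$ and the affine map $N$, and then invoking the affine Nijenhuis condition \eqref{Nij.cond}. First I would record the two substitution rules that feed every computation: the definition of the linearised operator, $N_o a = Na - No$, and the linearised bracket from \eqref{Lie.bracket}, $[x,y]_o = [x,y] - [x,o] + [o,o] - [o,y]$, both read in the vector space $A_o$ with the additive conventions of Remark~\ref{rem.Nij}. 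Since $N_o$ is exactly the linearisation \eqref{linearisation} of $N$, it is a genuine linear operator on $A_o$, so the only content is the torsion-free identity above.

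For the left-hand side I would substitute $N_o a = Na - No$ and $N_o b = Nb - No$ into $[-,-]_o$ and use bi-affinity of $[-,-]$; after the terms carrying a lone $o$ cancel against the $-[\,\cdot\,,o]$, $-[o,\cdot\,]$ and $+[o,o]$ corrections, the left-hand side should collapse to
$$
[N_o a, N_o b]_o = [Na,Nb] - [Na,No] + [No,No] - [No,Nb],
$$
that is, the bi-linearisation of $[-,-]$ based at $No$ rather than at $o$. This is the form I will aim to reproduce from the right.

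The right-hand side is the laborious part. I would expand each inner bracket $[N_o a, b]_o$, $[a, N_o b]_o$ and $N_o[a,b]_o$ by the same two rules, collect the resulting linear combination of affine brackets, and then apply the outer operator in the form $N_o(\,\cdot\,) = N(\,\cdot\,) - No$. The goal of this bookkeeping is to reassemble everything into the four clusters
$$
N\big([Nx,y] - N[x,y] + [x,Ny]\big), \qquad (x,y) \in \{(a,b),(a,o),(o,o),(o,b)\},
$$
entering with signs $+,-,+,-$, with all remaining terms (and the stray $-No$ coming from the outer shift) cancelling. Once in this form, the key move is to apply the affine Nijenhuis condition \eqref{Nij.cond}, crucially not only at $(a,b)$ but also at the auxiliary pairs $(a,o)$, $(o,o)$ and $(o,b)$, which is legitimate because the condition in Definition~\ref{def.nijen.op} holds for \emph{all} arguments. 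Each cluster then becomes $[Nx,Ny]$, and the signed sum is precisely $[Na,Nb] - [Na,No] + [No,No] - [No,Nb]$, matching the left-hand side.

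I expect the main obstacle to be exactly this regrouping in the right-hand computation: because $N$ need not fix $o$ (in general $No \neq o$), the operator $N_o$ differs from $N$ by the constant translation by $No$, and this shift interacts with every bracket to spawn a large number of correction terms involving $o$ and $No$. Keeping track of them and confirming that they assemble into the four Nijenhuis clusters, with everything else cancelling, is the delicate step; the conceptual reason it succeeds is that the affine condition \eqref{Nij.cond} is genuinely base-point free, so it can be applied simultaneously at $(a,b)$ and at the three degenerate pairs produced by the linearisation.
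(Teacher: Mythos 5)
Your proposal is correct and follows essentially the same route as the paper: both compute the left-hand side down to $[Na,Nb]-[Na,No]+[No,No]-[No,Nb]$ and regroup the expanded right-hand side into the four clusters $N\big([Nx,y]-N[x,y]+[x,Ny]\big)$ for $(x,y)\in\{(a,b),(a,o),(o,o),(o,b)\}$ with signs $+,-,+,-$, then apply the affine Nijenhuis condition \eqref{Nij.cond} at each of these pairs. Your explicit observation that the condition must be invoked at the three degenerate pairs involving $o$, not just at $(a,b)$, is exactly the point the paper's closing sentence leaves implicit.
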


\section*{Acknowledgements} 

The research of Tomasz Brzezi\'nski is partially supported by the National Science Centre, Poland, grant no. 2019/35/B/ST1/01115.

\end{document}